\newtheorem{theorem}{Theorem}[section]
\newtheorem{proposition}[theorem]{Proposition}
\newtheorem{lemma}[theorem]{Lemma}
\theoremstyle{definition}
\newtheorem{remark}[theorem]{Remark}
\newtheorem{definition}[theorem]{Definition}
\def\!{\mathop{\mathrm{!}}}
\def\R{\mathbb{ R}}
\def\H{\mathbb{H}}
\def\N{\mathcal{ N}}
\def\G{\mathcal{ G}}
\def\O{\mathcal{ O}}
\def\P{\mathcal{P}}
\def\tr{\mathrm{tr}}
\def\L{\mathcal{L}}
\def\norm#1{{\left\|\,#1\,\right\|}}
\def\inprod#1#2{\langle #1,\,#2\rangle}
\def\mc{\multicolumn}
\definecolor{blue}{rgb}{0,0,0.9}
\definecolor{red}{rgb}{0.9,0,0}
\journal{}
\begin{document}

\begin{frontmatter}
\title{Regularization of Wasserstein barycenters for $\varphi$-exponential distributions}
\author{S. Kum}
\address[S. Kum]{Department of Mathematics Education, Chungbuk National University, Cheongju 28644, Republic of Korea. Email: shkum@chungbuk.ac.kr.}
\author{M. H. Duong}
\address[M. H. Duong]{School of Mathematics, University of Birmingham, B15 2TT, UK. Email: h.duong@bham.ac.uk (corresponding author).}
\author{Y. Lim}
\address[Y. Lim]{Department of Mathematics, Sungkyunkwan University, Suwon 440-746, Republic of Korea. Email: ylim@skku.edu.}
\author{S. Yun}
\address[S. Yun]{Department of Mathematics Education, Sungkyunkwan University,  Seoul 03063, Republic of Korea. Email: yswmathedu@skku.edu.}
\begin{abstract}
In this paper, we focus on the analysis of the regularized Wasserstein barycenter problem. We provide uniqueness and a characterization of the barycenter for two important classes of probability measures: (i) Gaussian distributions and (ii) $q$-Gaussian distributions; each regularized
by a particular entropy functional. We propose an algorithm based on gradient projection method in the space of matrices in order to compute these regularized barycenters. We also consider  a general class of $\varphi$-exponential measures, for which only the
non-regularized barycenter is studied. Finally, we numerically show the influence of parameters and stability of the algorithm under small perturbation of data.
\end{abstract}
\end{frontmatter}
\section{Introduction}
\subsection{Regularization of barycenters in the Wasserstein space} In this paper we are interested in \textit{the regularization of barycenters in the Wasserstein space}, which is a minimization problem of the form
\begin{equation}
\label{eq: penalization}
\min_{\mu \in \P_2(\R^d)}\sum_{i=1}^n\frac{1}{2}\lambda_i W_2^2(\mu,\mu_i)+\gamma F(\mu),
\end{equation}
where $\P_2(\R^d)$ is the Wasserstein
space of probability measures on $\R^d$ with finite second moments;
$\{\mu_i\}_{i=1}^{n}$ are $n$ given probability measures in $\P_2(\R^d)$;
$W_2$ is the $L^2$-Wasserstein distance between two probability
measures in $\P_2(\R^d)$ (cf. Section \ref{sec: preliminary}),
and $F:\P_2(\R^d)\rightarrow \R$ is an entropy
functional. Finally $\gamma\geq 0$ is a given regularization parameter; $\lambda_1,\ldots, \lambda_n$
are given non-negative numbers (weights) satisfying
$\sum_{i=1}^n\lambda_i=1$.

\subsection{Literature review} Problem \eqref{eq: penalization}
 for $\gamma=0$ has been studied intensively in the literature.
 It was first studied by Knott and Smith \cite{KnottSmith1984} for Gaussian measures.
 In \cite{Agueh2011}, Agueh and Carlier studied the general case proving,
 among other things, the existence and uniqueness of a minimizer provided that one of $\mu_i$'s
  vanishes on small sets (i.e. sets whose Hausdorff dimension is at most $d-1$). Examples of such measures include those that are absolutely continuous with respect to the Lebesgue measure. The minimizer
  is called the barycenter of the measures $\mu_i$ with weights $\lambda_i$ extending a classical
  characterization of the Euclidean barycenter. The article \cite{Agueh2011} has sparked off many
   research activities from both theoretical and computational aspects over the last years.
    Wasserstein barycenters in different settings, such as over compact Riemannian manifolds \cite{KimPass2017} and over discrete data \cite{AnderesBorgwardtMiller2016}
     have been investigated . In the compact Riemannian setting, the condition to vanish on small sets ensuring uniqueness is replaced
by absolute continuity with respect to the volume measure  \cite{KimPass2017}. However, in the discrete setting, the uniqueness and absolute continuity of the barycenter is lost \cite{AnderesBorgwardtMiller2016}. Connections between Wasserstein barycenters
     and optimal transports have been explored \cite{Pass2012, KimPass2015}. Several computational methods for the computation
      of the barycenter have been developed \cite{CutiriDoucet2014, Alvarez2016, KumYun2018,PeyreCuturi2019}. Recently Wasserstein
      barycenters has found many applications in statistics, image processing and machine learning
       \cite{Rabin2011, MallastoFeragen2017, Srivastava2018}. We refer the reader to the mentioned papers and references therein
       for a more detailed account of the topic.

The case $\gamma>0$ has been studied in the recent paper
 \cite{BigotCazellesPapadakis2019} where the existence,
 uniqueness and stability of a minimizer, which is called the regularized barycenter,
  has been established. In particular, this paper shows that if the regularizing function is a proper and lower semicontinuous function (for the Wasserstein distance) and is strictly convex on its domain, then there exists a unique regularized barycenter even in the case of discrete measures. In addition,  the regularization parameter $\gamma$ was proved to provide smooth
barycenters especially when the input probability measures are
irregular which is useful for data analysis
\cite{BCP2018,Redko2019}. In addition, the regularized barycenter
problem also resembles the discretization formulation of Wasserstein
gradient flows for dissipative evolution equations
\cite{JKO1998,AGS08,Carlier2017} and the fractional heat equation
\cite{DuongJin2019} at a given time step where $\{\mu_i\}$ represent
discretized solutions at the previous steps and $\gamma$ is
proportional to the time-step parameter. 

Gaussian measures play an important role in the study of Wasserstein barycenter problem since in this case an useful characterization of the barycenter exists \cite{Agueh2011,Bhatia2018} which gives rise to efficient computational algorithms such as the fixed point approach \cite{Alvarez2016} and the gradient projection method \cite{KumYun2018}. Our aim in this paper is to seek for a large class of probability measures so that the regularized barycenter can be explicitly characterized and computed similarly to the case of Gaussian measures. It is worth mentioning that many papers in the literature study a related problem of  entropic regularization of optimal transports where the Wasserstein distance is regularized by an entropic term. The problem of finding a closed form solution for such problems in the case of Gaussian distributions has increasingly attracted interest in the community of computational optimal transport and machine learning \cite{Janati2020,Mallasto2021}. The problem that we study in this paper is different from these papers since the entropy term is added outside of the Wasserstein distance.

We will study the regularization problem
\eqref{eq: penalization} for two important classes of probability measures, namely Gaussian and $q$-Gaussian measures, where the entropy functional is the negative Boltzmann entropy and the Tsallis entropy, respectively. In addition, we also study the non-regularization problem (i.e., \eqref{eq: penalization} with $\gamma=0$) for the class of $\varphi$-exponential measures, which contains both Gaussian measures and  $q$-Gaussian measures as special cases, cf. Section \ref{sec: intro varphi-exponential} below. To state our main results, we now briefly recall the definition of $\varphi$-exponential measures; more detailed will be given in Section \ref{sec: preliminary}.

\subsection{$\varphi$-exponential distributions} 
\label{sec: intro varphi-exponential}
Let $\varphi$
 be an increasing, positive, continuous function on $(0,\infty)$, the $\varphi$-logarithmic is defined by \cite{Naudts}
\begin{equation}
\label{eq: varphi log}
\ln_{\varphi}(t):=\int_1^t\frac{1}{\varphi(s)}\,ds,
\end{equation}
which is increasing, concave and $C^1$ on $(0,\infty)$. Let $l_\varphi$ and $L_\varphi$ be respectively the infimum and the supremum of $\ln_\varphi$, that is
\begin{align*}
&l_\varphi:=\inf\limits_{t>0}\ln_\varphi(t)=\lim\limits_{t\downarrow 0} \ln_{\varphi}(t)\in [-\infty,0), 
\\& L_\varphi:=\sup\limits_{t>0}\ln_\varphi(t)=\lim\limits_{t\uparrow \infty} \ln_{\varphi}(t)\in (0,+\infty).
\end{align*}
The function $\ln_\varphi$ has the inverse function, which is called the $\varphi$-exponential function, and is defined on $(l_\varphi,L_\varphi)$. This inverse function can be extended to the whole $\R$ as
\begin{equation}
\label{eq: varphi exp}
\exp_{\varphi}(s):=\begin{cases}
0\qquad\text{for}~~s\leq l_\varphi,\\
\ln_\varphi^{-1}(s)\qquad\text{for}~~ s\in(l_\varphi,L_\varphi),\\
\infty\qquad\text{for}~~ s\geq L_\varphi,
\end{cases}
\end{equation}
which is $C^1$ on $(l_\varphi,L_\varphi)$.

Let ${\Bbb S}(d,\R)_+$ be the set of symmetric positive definite matrices of order $d$. Let $v\in
\mathbb{R}^d$ be a given vector and $V\in {\Bbb S}(d,\R)_+$ be a given
symmetric positive definite matrix. The $\varphi$-exponential
measure with mean $v$ and covariance matrix $V$, denoted by $G_\varphi(v,V)$, is the probability
measure on $\mathbb{R}^d$ with Lebesgue density
\begin{equation}
\label{eq: gvarphi}
g_\varphi(v,V)(x):=\exp_{\varphi}(\lambda_\varphi-c_\varphi|x-v|_V^2)\Big(\det(V)\Big)^{-\frac{1}{2}},
\end{equation}
where $|x|_V^2:=\langle x, V^{-1}x\rangle$, $\lambda_\varphi$ and $c_\varphi$ are normalization constants. Two important examples of $\varphi$-exponential
measures include Gaussian measures and $q$-Gaussian measures
corresponding to $\varphi(s)=s$ and $\varphi(s)=s^q$ respectively.
The $\varphi$-exponential measures play an important role in
statistical physics, information geometry and in the analysis of
nonlinear diffusion equations \cite{Otto2001,Ohara2009,Takatsu2012,
Takatsu2013}. More information about $\varphi$-exponential measures
will be reviewed in Section \ref{sec: preliminary}.
\subsection{Main results of the paper}
As already mentioned, in this paper we study the regularization problem \eqref{eq: penalization} for Gaussian measures and $q$-Gaussian measures, where the entropy functional is the (negative) Boltzmann entropy functional and the Tsallis entropy functional respectively, as well as the non-regularization problem for $\varphi$-exponential distributions. Main results of the present paper are explicit characterizations of the minimizer of \eqref{eq: penalization} and properties of the objective functions that can be summarized as follows.
\begin{theorem}\
\label{thm: main theorem 1}
\begin{enumerate}
\item Suppose that for each $i=1,\ldots, n$, $\mu_i$ is a $q$-Gaussian measure (Gaussian measure when $q=1$) with mean zero and covariance matrix $A_i\in \mathbb{S}(d,\R)_+$. Then the regularized barycenter problem \eqref{eq: penalization} has a unique minimizer, which is also a $q$-Gaussian measure with mean zero and covariance matrix $X$ satisfying
$$
X-\gamma m(q,d)(\det X)^{\frac{q-1}{2}} I=\sum_{i=1}^n \lambda_i \Big(X^\frac{1}{2}A_i X^\frac{1}{2}\Big)^\frac{1}{2},
$$
where $m(q,d)$ is a constant depending on $q$ and $d$ (see Theorem \ref{thm: q-Gaussian} for its explicit formula, in particular $m=1$ when $q=1$).
\item Suppose that for each $i=1,\ldots, n$, $\{\mu_i\}$ is a $\varphi$-exponential measure with mean zero and covariance matrix $A_i$. Then the unregularized barycenter problem (i.e. $\gamma=0$ in \eqref{eq: penalization}) has a unique minimizer, which is also a $\varphi$-exponential measure with mean zero and covariance matrix $X$ satisfying
$$
X=\sum_{i=1}^n \lambda_i (X^\frac{1}{2} A_i X^{1/2})^\frac{1}{2}.
$$
\end{enumerate}
\end{theorem}
\begin{theorem}
\label{thm: main theorem 2}
Suppose that $\{\mu_i\}$ are all Gaussian measures or all $q$-Gaussian measures with mean zero. Then the gradient of the objective function in the minimization problem \eqref{eq: penalization} is Lipschitz continuous, where the Lipschitz constant in each case can be found explicitly (see Theorem \ref{thm: Lipschitz continuity} and Theorem \ref{thm: Lipschitz continuity q-Gaussians} respectively).
\end{theorem}
Theorem \ref{thm: main theorem 1} summarizes Proposition \ref{prop: invariant}, Theorem \ref{thm: Gaussian} (for Gaussian measures), Theorem \ref{thm: q-Gaussian} (for $q$-Gaussian measures) and Theorem \ref{thm: varphi} (for general $\varphi$-exponential measures). Theorem \ref{thm: main theorem 2} summarizes Theorem \ref{thm: Lipschitz continuity} (for Gaussian measures)
and Theorem \ref{thm: Lipschitz continuity q-Gaussians} (for $q$-Gaussian measures). 

The key to the analysis of the present paper is that  the spaces of $\varphi$-exponential measures
and  Gaussian measures are isometric in the sense of Wasserstein
geometry \cite{Takatsu2012, Takatsu2013}, that is
\begin{equation*}
W_2(G_\varphi(v,V),G_\varphi(u, U))=W_2(\N(v, V),\N(u,U)),
\end{equation*}
where $\N(v, V)$ denotes a Gaussian measure with mean $v$ and covariance matrix $V$.
Therefore, since the Wassertein distance between Gaussian measures
can be computed explicitly, the objective functional in \eqref{eq:
penalization} can also be computed explicitly in terms of the
covariance matrices and \eqref{eq: penalization} becomes a minimization
problem over the space of symmetric positive definite matrices. We
then prove the strict convexity of the objective function and the
existence of solutions to the optimality equation using matrix
analysis tools as in \cite{Bhatia2018}. Theorems \ref{thm:
Gaussian},   \ref{thm: q-Gaussian} and \ref{thm: varphi} establish
the existence and uniqueness of a minimizer and provide an explicit
characterization of the minimizer in terms of nonlinear matrix
equations for the covariance matrix generalizing the characterization of the
Wasserstein barycenter for Gaussian measures in
\cite{Agueh2011,Bhatia2018} to the regularized Wasserstein barycenter
for Gaussian measures, $q$-Gaussian measures, and $\varphi$-exponential measures. Theorem
\ref{thm: Lipschitz continuity} and Theorem \ref{thm: Lipschitz
continuity q-Gaussians} prove the Lipschitz continuity of the
gradient of the objective function providing an explicit upper bound
for the Lipschitz constant generalizing the results of
\cite{KumYun2018} for the barycenter for Gaussian measures to our
setting. We also perform numerical experiments to show the affect of the parameter $q$ and a stability property of the algorithm under small perturbation of the data, cf. Section \ref{sec: numerical experiments}.

\subsection{Organization of the paper}
The rest of the paper is organized as follows. In Section \ref{sec: preliminary} we review relevant knowledge that will be used in subsequent sections on the Wasserstein metric and the Wasserstein geometry of Gaussian and $\varphi$-exponential distributions. Then we study the regularization of barycenters for Gaussian measures in Section \ref{sec: Gaussians} and extend these results to $q$-Gaussian and $\varphi$-exponential measures in Section \ref{sec: q-Gaussians} and Section \ref{sec: vaphi expo measures}. In Section \ref{sec: GPM} we describe a gradient projection method for the computation of the minimizer and prove that the gradient function is Lipschitz continuous. Finally, in Section \ref{sec: numerical experiments}, we numerically show affect of parameters to the minimizer and stability of the algorithm under small perturbation of data. 
\section{Wasserstein metric, Gaussian measures and $\varphi$-exponential measures}
\label{sec: preliminary}
In this section, we summarize relevant knowledge that will be used in subsequent sections on the Wasserstein metric and the Wasserstein geometry of Gaussian and $\varphi$-exponential distributions.
\subsection{Wasserstein metric}
We recall that $\P_2(\R^d)$ is the space of probability measures $\mu$ on $\R^d$ with finite second moment, namely
$$
\int_{\R^d}|x|^2\mu(dx)<\infty.
$$
Let $\mu$ and $\nu$ be two probability measures belonging to $\P_2(\R^d)$. The $L^2$-Wasserstein distance, $W_2(\mu,\nu)$, between $\mu$ and $\nu$ is defined via
\begin{equation}
\label{eq: W2}
W^2_2(\mu,\nu):=\inf_{\gamma\in \Gamma(\mu,\nu)}\int_{\R^d\times\R^d}|x-y|^2\,\gamma(dx,dy),
\end{equation}
where $\Gamma(\mu,\nu)$ denotes the set of transport plans between $\mu$ and $\nu$, i.e., the set of all probability measures on $\R^d\times \R^d$ having $\mu$ and $\nu$ as the first and the second marginals respectively. More precisely,
$$
\Gamma(\mu,\nu):=\{\gamma\in \P(\R^d\times \R^d): \gamma(A\times \R^d)=\mu(A)~\text{and}~ \gamma(\R^d\times A)=\nu(A)\},
$$
for all Borel measurable sets $A\subset\R^d$. It has been proved that, under rather general conditions (e.g., when $\mu$ and $\nu$ are absolutely continuous with respect to the Lesbegue measure), an optimal transport plan in \eqref{eq: W2} uniquely exists and is of the form $\gamma=[\mathrm{id}\times\nabla\psi]_\#\mu$ for some convex function $\psi$ where $\#$ denotes the push forward \cite{Brenier1991, GangboMcCann1996}.

The Wasserstein distance is an instance of a Monge-Kantorevich optimal transportation cost functional and plays a key role in many branches of mathematics such as optimal transportation, partial differential equations, geometric analysis and has been found many applications in other fields such as economics, statistical physics and recently in machine learning. We refer the reader to the celebrated monograph \cite{Villani2003} for a great exposition of the topic.

We now consider two important classes of probability measures, namely Gaussian measures and $\varphi$-exponential measures, for which there is an explicit expression for the Wasserstein distance between two members of the same class. Although Gaussian measures are special cases of $\varphi$-exponential measures, but we consider them separately since many proofs for the former are much simplified than those for the latter.
\subsection{Wasserstein distance of Gaussian measures}
Given any $X\in {\Bbb S}(d,\R)_+$, we define a symmetric positive definite matrix $X^{1/2}$ such that $X^{1/2} X^{1/2}=X$. Throughout the paper, we denote by $I$ the identity matrix of order $d$. The Wasserstein distance between two Gaussian measures is well-known \cite{GivensShortt1984}, see also e.g., \cite{Takatsu2012}:
\begin{equation}
\label{eq: W2-Gaussians}
W_2(\N(u,U),\N(v,V))^2=|u-v|^2+\mathrm{tr}U+\mathrm{tr}V-2\mathrm{tr}\sqrt{V^\frac{1}{2}U V^\frac{1}{2}}.
\end{equation}
Furthermore, $[\mathrm{id}\times \nabla \mathcal{T}]_\#\N(u,U)$ is the optimal plan between them, where
\begin{equation}
\label{eq: optimal plan Gaussian}
\mathcal{T}(x)=\frac{1}{2}\langle x-u, T(x-u)\rangle+\langle x,v\rangle, \quad T=V^\frac{1}{2}\Big(V^\frac{1}{2} U V^\frac{1}{2}\Big)^{-\frac{1}{2}}V^\frac{1}{2}.
\end{equation}
\subsection{The entropy of Gaussian measures}
The (negative) Boltzmann entropy of a probability measure $\mu=\mu(x)dx$ on $\mathbb{R}^d$ is defined by
\begin{equation}
\label{eq: entropy}
F(\mu):=\int_{\R^d} \mu(x) \log \mu(x)\,dx.
\end{equation}
Using Gaussian integral, the (negative) Boltzmann entropy of a Gaussian measure can be computed explicitly \cite[Theorem 9.4.1]{CoverThomas1991}:
\begin{equation}
\label{eq: entropy of Gaussian measures}
F(\N(u, U))=-\frac{d}{2}\ln(2\pi e)-\frac{1}{2}\ln\det(U).
\end{equation}
We now consider the second class of probability measures: $\varphi$-exponential measures.
\subsection{$\varphi$-exponential measures and Wassertein distance}
We recall that for a given increasing, positive and continuous function $\varphi$ on $(0,\infty)$, the $\varphi$-logarithmic function and the $\varphi$-exponential function are respectively defined in \eqref{eq: varphi log} and \eqref{eq: varphi exp}. Two important classes of $\varphi$-exponential functions are:
\begin{enumerate}[(i)]
\item  $\varphi(s)=s$: the $\varphi$-logarithmic function and the $\varphi$-exponential function become the traditional logarithmic and exponential functions: $\ln_\varphi(t)=\ln (t),~~\exp_{\varphi}(t)=\exp(t)$.
\item $\varphi(s)=s^q$ for some $q>0$: the $\varphi$-logarithmic function and the $\varphi$-exponential function become the $q$-logarithmic and $q$-exponential functions respectively
\begin{equation*}
\ln_{\varphi}(t)=\log_q(t)=\frac{t^{1-q}-1}{1-q} \quad \text{for}~ t>0,\quad \exp_{\varphi}(t)=\exp_q(t)=\Big( 1+(1-q)t\Big)_+^\frac{1}{1-q},
\end{equation*}
where $[x]_+=\max\{0,x\}$ and by convention $0^a\colonequals\infty$. The $q$-logarithmic function satisfies the following property
\begin{equation}
\label{eq: q-log properties}
\ln_q(xy)=\ln_q(x)+\ln_q(y)+(1-q)\ln_q(x)\ln_q(y).
\end{equation}
\end{enumerate}
\begin{definition} For any $a\in\R$, we define $\O(a)$ to be the set of all increasing,  positive, continuous function $\varphi$ on $(0,\infty)$ such that $\max\{\delta_\varphi,\delta^\varphi\}<a$ where
$$
\delta_\varphi:=\inf\Big\{\delta\in \R\Big\vert \lim\limits_{s\downarrow 0}\frac{s^{1+\delta}}{\varphi(s)}~\text{exists}\Big\}, \quad  \delta^\varphi:=\inf\Big\{\delta\in \R\Big\vert \lim\limits_{s\uparrow \infty}\frac{s^{1+\delta}}{\varphi(s)}=\infty\Big\}.
$$
\end{definition}
It is proved in \cite[Proposition
3.2]{Takatsu2013} that for any $\varphi\in \O(2/(d+2))$ there exist constants $\lambda_\varphi$ and $c_\varphi$ such that (cf. \eqref{eq: gvarphi} in the Introduction)
%
\begin{equation*}
g_\varphi(v,V)(x):=\exp_{\varphi}(\lambda_\varphi-c_\varphi|x-v|_V^2)\,\Big(\det(V)\Big)^{-\frac{1}{2}},
\end{equation*}
where $|x|_V^2:=\langle x, V^{-1}x \rangle$, is a probability density on $\R^d$ with mean $v$ and
 covariance matrix $V$, which is called a $\varphi$-exponential
 distribution. Note that, in the above expression, $\lambda_\varphi$ and
$c_\varphi$ are enough to define only at the identity matrix $I_d$,
not on all ${\Bbb S}(d,\R)_+$. We define the space of all $\varphi$-exponential distribution measures by
\begin{equation}
\G_{\varphi}:=\Big\{G_\varphi(v,V):=g_{\varphi}(v,V)\L^d\big\vert (v,V)\in \R^d\times {\Bbb S}(d,\R)_+\Big\}.
\end{equation}
Above $\L^d$ is the Lesbesgue measure on $\R^d$. Two important cases:
\begin{enumerate}[(i)]
\item $\varphi=s$, $\G_\varphi$ reduces to the class of Gaussian measures with mean $v$ and covariance matrix $V$.
\item In the case $\varphi=s^q$, $\G_{\varphi}$ becomes the class of all $q$-Gaussian measures
$$\G_q=\Big\{G_q(v,V)\big\vert (v,V)\in
\R^d\times {\Bbb S}(d,\R)_+\Big\}$$ where
\begin{equation*}
G_q(v,V)=C_0(q,d)(\det V)^{-\frac{1}{2}}\exp_q\Big(-\frac{1}{2}C_1(q,d)\langle x-v,V^{-1}(x-v)\rangle\Big)\L^d,
\end{equation*}
and $C_0(q,d), C_1(q,d)$ are given by
\begin{align*}
& C_1(q,d)=\frac{2}{2+(d+2)(1-q)},\\
&C_0(q,d)=
\begin{cases} \frac{\Gamma\Big(\frac{2-q}{1-q}+\frac{d}{2}\Big)}{\Gamma\Big(\frac{2-q}{1-q}\Big)} \Big(\frac{(1-q)C_1(q,d)}{2\pi}\Big)^\frac{d}{2}& \text{if $0<q<1$,}
\\ \frac{\Gamma\Big(\frac{1}{q-1}\Big)}{\Gamma\Big(\frac{1}{q-1}-\frac{d}{2}\Big)}\Big(\frac{(q-1)C_1(q,d)}{2\pi}\Big)^\frac{d}{2}& \text{if $1<q<\frac{d+4}{d+2}$.}
\end{cases}
\end{align*}
Note that $C_1(1,d)=1$ and $C_0(q,d)\rightarrow (2\pi)^{-d/2}$ as $q\rightarrow 1$, which follows from Stirling’s formula. Thus Gaussian measures are special cases of $q$-Gaussian measures.
\end{enumerate}
The $\varphi$-exponential measures play an important role in statistical physics, information geometry and in the analysis of nonlinear diffusion equations \cite{Otto2001,Ohara2009,Takatsu2012, Takatsu2013}. We refer to \cite{Ohara2009,Takatsu2012, Duong2015} for further details on $q$-Gaussian measures, $\varphi$-exponential measures and and their properties.

The following result explains why $q$-Gaussian measures and $\varphi$-exponential measures are special. It will play a key role in the analysis of this paper.
\begin{proposition}
\label{prop: q-Gaussian properties}
The following statements hold \cite{Takatsu2012,Takatsu2013}
\begin{enumerate}
\item For any $q\in (0,1)\cup \Big(1,\frac{d+4}{d+2}\Big)$, the space of $q$-Gaussian measures is convex and isometric to the space of Gaussian measures with respect to the Wasserstein metric.
\item For any $\varphi\in\O(2/(d+2))$ with $d\geq 2$, the space $\G_\varphi$ is convex and isometric to the space of Gaussian measures with respect to the Wasserstein metric.
\item Let $G_\varphi(\nu, V)$ and $G_\varphi(\mu, U)$ be two $\varphi$-exponential distributions. Then~ $[\mathrm{id}\times\nabla \mathcal{T}]_{\#}G_\varphi(\mu,U)$, where $\mathcal{T}$ is defined in \eqref{eq: optimal plan Gaussian}, is the optimal plan in the definition of $W^2_\varphi(G_q(\nu, V), G_\varphi(\mu, U)$.
\item We have
\begin{align} \label{eq: W2 formula}
W_2(G_\varphi(\mu,U),G_\varphi(\nu,V))^2&=W_2(G_q(\mu,U),G_q(\nu,V))^2\notag
\\&=W_2(\N(\mu,U),\N(\nu,V))^2\notag\\
&=|\mu-\nu|^2+\mathrm{tr}U+\mathrm{tr}V-2\mathrm{tr}\sqrt{V^\frac{1}{2}U
V^\frac{1}{2}}.
\end{align}
\end{enumerate}
\end{proposition}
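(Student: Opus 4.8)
The plan is to deduce all four statements from a single structural fact: every $\varphi$-exponential measure is an affine push-forward of the fixed rotation-invariant reference $G_\varphi(0,I_d)$, and the optimal transport between two affine push-forwards of a common rotation-invariant measure is itself affine and does not feel the radial profile of that measure. Once this is in place, the $q$-Gaussian and general $\varphi$ cases become formally identical to the Gaussian computation recorded in \eqref{eq: W2-Gaussians}--\eqref{eq: optimal plan Gaussian}.

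\emph{Step 1 (affine parametrization).} By \cite[Proposition~3.2]{Takatsu2013}, under the stated hypotheses on $\varphi$ --- namely $\varphi\in\O(2/(d+2))$ with $d\ge 2$, or $\varphi(s)=s^q$ with $q\in(0,1)\cup(1,\tfrac{d+4}{d+2})$, or $\varphi(s)=s$ --- the function $g_\varphi(0,I_d)$ in \eqref{eq: varphi exp distr} is a genuine probability density on $\R^d$ with mean $0$ and covariance $I_d$; this is precisely what the normalizing quantities $\lambda_\varphi,c_\varphi$ (and $C_0(q,d),C_1(q,d)$ in the $q$-Gaussian case) are chosen to achieve, and I would invoke it rather than re-derive it. A change of variables then shows that pushing $G_\varphi(0,I_d)$ forward by the affine map $x\mapsto v+V^{1/2}x$ produces exactly the density \eqref{eq: varphi exp distr}, so $G_\varphi(v,V)=[x\mapsto v+V^{1/2}x]_{\#}G_\varphi(0,I_d)$. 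Hence $\G_\varphi$ is the orbit of $G_\varphi(0,I_d)$ under invertible affine maps with positive definite linear part, in bijection with $\R^d\times\mathbb{S}(d,\R)_+$, exactly as in the Gaussian case.

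\emph{Steps 2--3 (statements (3) and (4)).} Let $\nabla\mathcal{T}(x)=T(x-\mu)+\nu$ be the affine map from \eqref{eq: optimal plan Gaussian}, with $T$ the unique symmetric positive definite solution of $TUT=V$ (this is the Brenier map from $\N(\mu,U)$ to $\N(\nu,V)$); being affine with positive definite linear part it is the gradient of the convex quadratic $x\mapsto\langle\nu,x\rangle+\tfrac12\langle x-\mu,T(x-\mu)\rangle$. Using Step~1, $[\nabla\mathcal{T}]_{\#}G_\varphi(\mu,U)=[x\mapsto\nu+TU^{1/2}x]_{\#}G_\varphi(0,I_d)$. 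The matrix $R:=V^{-1/2}TU^{1/2}$ satisfies $RR^{\top}=V^{-1/2}TUTV^{-1/2}=I$, hence is orthogonal, so rotation invariance of $G_\varphi(0,I_d)$ yields $[x\mapsto\nu+TU^{1/2}x]_{\#}G_\varphi(0,I_d)=[x\mapsto\nu+V^{1/2}Rx]_{\#}G_\varphi(0,I_d)=[x\mapsto\nu+V^{1/2}x]_{\#}G_\varphi(0,I_d)=G_\varphi(\nu,V)$. Since $G_\varphi(\mu,U)$ is absolutely continuous, Brenier's theorem \cite{Brenier1991,GangboMcCann1996} then certifies that $[\mathrm{id}\times\nabla\mathcal{T}]_{\#}G_\varphi(\mu,U)$ is the unique optimal plan, which is (3). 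For (4) I would expand $\int|\nabla\mathcal{T}(x)-x|^2\,dG_\varphi(\mu,U)(x)$: writing $\nabla\mathcal{T}(x)-x=(T-I)(x-\mu)+(\nu-\mu)$, the cross term integrates to zero because $G_\varphi(\mu,U)$ has mean $\mu$, and $TUT=V$ together with cyclicity of the trace (and the similarity of $U^{1/2}VU^{1/2}$ and $V^{1/2}UV^{1/2}$) collapses the remainder to $|\mu-\nu|^2+\tr U+\tr V-2\tr\sqrt{V^{1/2}U V^{1/2}}$, which is \eqref{eq: W2-Gaussians}. This computation is blind to which of $\{\varphi,\,s^q,\,s\}$ is used, so all the equalities in \eqref{eq: W2 formula} follow at once.

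\emph{Step 4 (convexity in (1), (2) and the isometry).} By absolute continuity and Brenier, the unique $W_2$-geodesic from $G_\varphi(\mu,U)$ to $G_\varphi(\nu,V)$ is the displacement interpolation $t\mapsto[(1-t)\mathrm{id}+t\nabla\mathcal{T}]_{\#}G_\varphi(\mu,U)$, $t\in[0,1]$. Since $(1-t)\mathrm{id}+t\nabla\mathcal{T}$ is the affine map $x\mapsto\big((1-t)I+tT\big)(x-\mu)+(1-t)\mu+t\nu$ with positive definite linear part $(1-t)I+tT$, Step~1 shows the interpolant again lies in $\G_\varphi$ (respectively $\G_q$); hence these spaces are geodesically convex. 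Combined with (4), the bijection $\N(v,V)\leftrightarrow G_\varphi(v,V)$ is a surjective $W_2$-isometry, which is (1) and (2). I expect the one genuinely delicate point to be the push-forward identity $[\nabla\mathcal{T}]_{\#}G_\varphi(\mu,U)=G_\varphi(\nu,V)$: it is exactly where rotation invariance of the reference is indispensable, and one must use that $T$ is the \emph{symmetric} positive definite root of $TUT=V$ so that $R=V^{-1/2}TU^{1/2}$ really is orthogonal; the rest is bookkeeping or a direct appeal to \cite{Takatsu2012,Takatsu2013} and Brenier's theorem.
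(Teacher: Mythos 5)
Your argument is correct in substance. Note, however, that the paper does not prove Proposition \ref{prop: q-Gaussian properties} at all: it is quoted verbatim from \cite{Takatsu2012,Takatsu2013}, so there is no internal proof to compare against. What you have written is essentially a reconstruction of the argument in those references: represent every member of $\G_\varphi$ as an affine push-forward $[x\mapsto v+V^{1/2}x]_{\#}G_\varphi(0,I_d)$ of the spherically symmetric reference (this relies on the normalization fact from \cite[Proposition 3.2]{Takatsu2013}, also invoked by the paper, that $g_\varphi(v,V)$ really has mean $v$ and covariance $V$), then observe that the affine Brenier map between the corresponding Gaussians transports $G_\varphi(\mu,U)$ to $G_\varphi(\nu,V)$ because $R=V^{-1/2}TU^{1/2}$ is orthogonal and the reference is rotation invariant, and conclude optimality from Brenier's theorem, the cost formula by the trace computation, and geodesic convexity from displacement interpolation with linear part $(1-t)I+tT>0$. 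Two small points deserve flagging. First, you take $T$ to be the symmetric positive definite solution of $TUT=V$, i.e.\ $T=U^{-1/2}(U^{1/2}VU^{1/2})^{1/2}U^{-1/2}$, and call it ``the map from \eqref{eq: optimal plan Gaussian}''; as printed in the paper, the matrix in \eqref{eq: optimal plan Gaussian} satisfies $TVT=U$ rather than $TUT=V$ (an apparent swap of $U$ and $V$), so you have silently corrected the formula --- your choice is the right one for pushing $\N(\mu,U)$ to $\N(\nu,V)$. Second, ``convex'' in the proposition should indeed be read as geodesic (displacement) convexity, as you do; linear convexity of $\G_\varphi$ would be false, just as mixtures of Gaussians are not Gaussian. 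With those readings, your proof is complete and matches the route taken in the cited literature.
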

\subsection{The Tsallis entropy of a $q$-Gaussian measure}
The Tsallis entropy of a probability measure $\mu=\mu(x)dx$ on $\mathbb{R}^d$ is defined by
\begin{equation}
\label{eq: Tsalli}
F_q(\mu):=\int_{\R^d} \mu(x)\ln_q\mu(x)\,dx=\frac{1}{1-q}\int_{\R^d}[\mu(x)^{1-q}-1]\mu(x)\,dx.
\end{equation}
The Tsallis entropy of a $q$-Gaussian can also be computed explicitly using the property \eqref{eq: q-log properties} and similar computations as in the Gaussian case.
\begin{lemma}
\label{lem: Tsallis entropy of q-Gaussian}
It holds that  \cite{Duong2015}
\begin{equation}
F_q(G_q(\mu, U))=-\frac{d}{2}C_1(q,d)+\Big[1-(1-q)\frac{d}{2}C_1(q,d)\Big]\ln_{q}\frac{C_0(q,d)}{(\det U)^\frac{1}{2}}.
\end{equation}
\end{lemma}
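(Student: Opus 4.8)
The plan is to insert the explicit Lebesgue density of $G_q(\mu,U)$ into the definition \eqref{eq: Tsalli} of the Tsallis entropy and to collapse the resulting integral to two elementary moments of the $q$-Gaussian. Write $g$ for the density of $G_q(\mu,U)$ and abbreviate $A:=C_0(q,d)\,(\det U)^{-\frac12}$, so that $g(x)=A\,\exp_q\bigl(-\tfrac12 C_1(q,d)\,\langle x-\mu,U^{-1}(x-\mu)\rangle\bigr)$. The first --- and essentially the only nontrivial --- step is to linearise $\ln_q g$: applying the product rule \eqref{eq: q-log properties} to the two factors $A$ and $\exp_q\bigl(-\tfrac12 C_1(q,d)\,\langle x-\mu,U^{-1}(x-\mu)\rangle\bigr)$, and using the identity $\ln_q(\exp_q(t))=t$ (immediate from \eqref{eq: q-exp}, valid wherever $\exp_q(t)>0$), one obtains the pointwise identity
\[
\ln_q g(x)=\ln_q A-\tfrac12 C_1(q,d)\,\langle x-\mu,U^{-1}(x-\mu)\rangle\,\bigl(1+(1-q)\ln_q A\bigr).
\]

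Next I would multiply this identity by $g(x)$ and integrate over $\R^d$. Only two integrals then survive: the total mass $\int_{\R^d}g\,dx=1$, because $G_q(\mu,U)$ is a probability measure, and the weighted second moment $\int_{\R^d}g(x)\,\langle x-\mu,U^{-1}(x-\mu)\rangle\,dx=\tr(U^{-1}U)=d$, because the covariance matrix of $G_q(\mu,U)$ equals $U$. Substituting these two values into \eqref{eq: Tsalli} and collecting the terms proportional to $\ln_q A=\ln_q\frac{C_0(q,d)}{(\det U)^{1/2}}$, a short rearrangement yields the asserted expression; in particular the explicit formulas for $C_0(q,d)$ and $C_1(q,d)$ are never needed beyond the bare facts that $g$ is a normalized density and has covariance matrix $U$.

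I do not expect a genuine obstacle here, since the whole argument is driven by the product rule \eqref{eq: q-log properties}, which is exactly what turns $\ln_q$ of the product-type density into an affine function of $\langle x-\mu,U^{-1}(x-\mu)\rangle$. The one point deserving a line of care is the use of $\ln_q\circ\exp_q=\mathrm{id}$ on the support of $g$ in the sub-Gaussian range $0<q<1$, where $\exp_q$ is only piecewise given by the formula in \eqref{eq: q-exp}: on $\{g>0\}$ one has $1+(1-q)\bigl(-\tfrac12 C_1(q,d)\,\langle x-\mu,U^{-1}(x-\mu)\rangle\bigr)>0$, which is precisely the region on which $\exp_q$ genuinely inverts $\ln_q$, while for $1<q<\frac{d+4}{d+2}$ this quantity is $\ge 1$ automatically because $1-q$ and $-\tfrac12 C_1(q,d)\,\langle x-\mu,U^{-1}(x-\mu)\rangle$ are both non-positive. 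Beyond this, the proof is pure bookkeeping with the zeroth and second moments recorded above.
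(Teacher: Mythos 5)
Your computation is correct and is exactly the argument the paper has in mind: the paper gives no proof of this lemma (it cites \cite{Duong2015} and remarks that the entropy "can be computed explicitly using the property \eqref{eq: q-log properties} and similar computations as in the Gaussian case"), and your linearization of $\ln_q g$ via the $q$-product rule, followed by the zeroth and second moments $\int g=1$ and $\int g\,\langle x-\mu,U^{-1}(x-\mu)\rangle\,dx=d$, is precisely that computation; your care about the support set $\{g>0\}$ for $0<q<1$ is also the right point to flag. One caveat: what your calculation produces is $\int \mu\,\ln_q\mu$, i.e.\ the prefactor-free definition \eqref{eq: Tsallis entropy}, and this is indeed what the stated formula equals (its $q\to1$ limit recovers \eqref{eq: entropy of Gaussian measures}); however you cite \eqref{eq: Tsalli}, which carries an extra factor $\frac{1}{1-q}$, and with that factor retained the asserted identity would be off by $\frac{1}{1-q}$. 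This mismatch is an inconsistency between the paper's two definitions of $F_q$ rather than an error in your argument, but you should state explicitly that the lemma holds for $F_q(\mu)=\int\mu\ln_q\mu$ and drop the $\frac{1}{1-q}$ prefactor when invoking the definition.
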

The first result of the present paper is the following proposition.
\begin{proposition}
\label{prop: invariant}
Suppose that $\mu_i\sim G_q(0, A_i)$. Then the regularized barycenter problem \eqref{eq: penalization} has a unique minimizer, which is also a $q$-Gaussian measure with mean $0$. This statement holds also for $q=1$ and in this case, the minimizer is a Gaussian measure with mean $0$. Similarly, when $\{\mu_i\}$ are all $\varphi$-exponential distributions with mean $0$, then the unregularized barycenter problem has a unique minimizer which is also a  $\varphi$-exponential distribution with mean $0$.
\end{proposition}
\begin{proof}
Since each of $\{\mu_i\}_{i=1}^n$ is a $q$-Gaussian measure with mean zero, then there exists a unique minimizer $\mu_*\in\mathcal{P}_2(\mathbb{R}^d)$, which is absolutely continuous with respect to the $d$-dimensional Lebesgue measure~ \cite{BigotCazellesPapadakis2019}. Let $v$ and $V$ be the mean and covariance matrix of $\mu_*$. Let $G_q(v,V)$ be the $q$-Gaussian measure with the same mean $v$ and covariance matrix $V$. 
Next we will show that
$$
\mu_*=G_q(v,V)\quad\text{and}\quad v=0\quad(\text{thus}~~\mu_*=G_q(0,V)).
$$
Since $G_q(v,V)$ minimizes the Tsallis entropy $F_q$ among all probability measures $\mu$ which are absolutely continuous with the $d$-dimensional Lebesgue measure having mean $v$ and covariance matrix $V$ (see for instance \cite{Takatsu2012}), we have
\begin{equation}
\label{eq: Tsallis inequal}
F_q(\mu_*)\geq F_q(G_q(v,V)).
\end{equation}
We recall the following equivalent, Monge and Kantorovich duality, characterizations of the Wassertein distance between two probability measures $\mu, \nu \in \P_2(\R^d)$ (see \cite[Theorem 5.10]{Villani2008})
\begin{align*}
W_2(\mu,\nu)^2&=\inf\limits_{T_\# \mu=\nu}\int_{\mathbb{R}^d}|x-T(x)|^2\,d\mu(x)
\\&=\sup_{\phi\in L^1(\nu)}\Big\{\int_{\mathbb{R}^d}\phi(y)^c\,d\nu(y)-\int_{\mathbb{R}^d}\phi(x)\,d\mu(x)\Big\},
\end{align*}
where $\phi^c(y)=\inf\limits_{y\in\mathbb{R}^d}\{\phi(x)+|x-y|^2\}$. In addition, the optimal transport map $T^*$ and the optimal Kantorovich potential $\phi^*$ in the above problems satisfy
$$
x-T^*(x)=\frac{1}{2}\nabla \phi^*(x).
$$
Let $T_i$ and $\phi_i$, $i=1,\ldots, n$ be the optimal transport map and the optimal Kantorovich potential for $W_2(\mu_i, G_q(v,V))$, that is
\begin{align*}
W_2(\mu_i,G_q(v,V))^2 &=\int_{\mathbb{R}^d}|x-T_i(x)|^2\,d \mu_i(x)
\\&=\int_{\mathbb{R}^d}\phi_i(y)^c \,d G_q(v,V)(y)-\int_{\mathbb{R}^d}\phi(x)\,d\mu_i(x).
\end{align*}
According to \cite[Theorem A]{Takatsu2012}, $T_i$ is given by $T_i=\nabla\mathcal{T}_i(x)$ where
\begin{align*}
\mathcal{T}_i(x)=\frac{1}{2}\langle x, \bar{T}_i x\rangle + \langle x, v\rangle, \quad  \bar{T}_i= V^{1/2}\Big(V^{1/2} A_i V^{1/2}\Big)^{-1/2} V^{1/2}.
\end{align*}
It follows that
$$
\phi_i(x)=|x|^2-2\mathcal{T}_i(x)=|x|^2-\langle x, \bar{T}_i x\rangle-2 \langle x, v\rangle.
$$
Therefore,
$$
\phi_i(y)^c=\phi_i(\bar{x})+\frac{1}{4}|\nabla \phi_i(\bar{x})|^2 \quad \text{where}\quad \nabla\phi_i(\bar{x})+2(\bar{x}-y)=0.
$$
It follows that the Jacobian matrix $J_i$ when changing the variable from $y$ to $\bar{x}$ is constant, $J_i=2I-\bar{T}_i$.
We have
\begin{align*}
\int_{\R^d}\phi_i^c(y) d\mu_*(y)&=\int_{\R^d}\Big(\phi_i(\bar{x})+\frac{1}{4}|\nabla \phi_i(\bar{x})|^2\Big)\,d\mu_*(y)
\\&\overset{(*)}{=}|J_i|\int_{\R^d}\Big(\phi_i(\bar{x})+\frac{1}{4}|\nabla \phi_i(\bar{x})|^2\Big)\,d\mu_*(\bar{x})
\\&\overset{(**)}{=}|J_i|\int_{\R^d}\Big(\phi_i(\bar{x})+\frac{1}{4}|\nabla \phi_i(\bar{x})|^2\Big)\,dG_q(v,V)(\bar{x})
\\&=\int_{\R^d}\phi_i^c(y) d G_q(v,V)(y),
\end{align*}
where $(**)$ follows from $(*)$ since the $(*)$ depends only on the mean and covariance of $\mu_*$ which is the same as $G_q(v,V)$.
Therefore
\begin{align}
W_2(\mu_i,\mu_*)^2&\geq \int_{\mathbb{R}^d}\phi_i(y)^c \,d \mu_*(y)-\int_{\mathbb{R}^d}\phi(x)\,d\mu_i(x)\notag
\\&=\int_{\mathbb{R}^d}\phi_i(y)^c d G_q(v,V)(y)-\int_{\mathbb{R}^d}\phi(x)\,d\mu_i(x)\notag
\\&=W_2(\mu_i,G_q(v,V))^2.\label{eq: W2inequal}
\end{align}
From \eqref{eq: Tsallis inequal} and \eqref{eq: W2inequal} we get
$$
\sum_{i=1}^n \lambda_i W_2(\mu_i,\mu_*)^2 +F_q(\mu_*)\geq \sum_{i=1}^n \lambda_i W_2(\mu_i,G_q(v,V))^2 +F_q(G_q(v,V)).
$$
By the uniqueness of minimizers, we deduce that $\mu_*=G_q(v,V)$. Moreover, the facts that
$$
F_q(G_q(v,V))=F_q(G_q(0,V)),\quad W_2(\mu_i, G_q(0,V))\leq W_2(\mu_i,G_q(v,V))
$$ 
ensure $v=0$. Note that this proof also holds true for $q=1$ where $q$-Gaussian measures and the Tsallis entropy are respectively replaced by Gaussian measures and the Boltzmann entropy. Similarly, using the third part of Proposition \ref{prop: q-Gaussian properties}, we can show that the minimizer of the unregularized barycenter is again a $\varphi$-exponential distribution if all the $\mu_i$ are $\varphi$-exponential distributions.  This completes the proof of this proposition.
\end{proof}
\section{Regularization of barycenters for Gaussian measures}
\label{sec: Gaussians}
In this section we study the following regularization of barycenters in the space of Gaussian measures
\begin{equation}
\label{eq: penalization Gaussian}
\min_{\mu\in \P_2(\R^d)}\sum_{i=1}^n\frac{1}{2}\lambda_i W_2^2(\mu,\mu_i)+\gamma F(\mu),
\end{equation}
where $\mu_i\sim \N(0,A_i)$ ($i=1,\ldots, n$),  $F$ is the (negative) Boltzmann entropy functional of a probability measure defined in \eqref{eq: entropy} and $\gamma>0$ is a regularization parameter. 

According to Proposition \ref{prop: invariant}, we only need to seek for the minimizer $\mu$ among Gaussian measures with mean zero, that is $\mu\sim \N(0,X)$ for some covariance matrix $X$. We note that we consider here Gaussian measures with zero mean just for simplicity, see Remark \ref{rem: non-zero gaussians} for further discussion on this assumption. The main results of the paper can be easily extended to the case of non-zero mean. From now on, we equip ${\Bbb S}(d,\R)_+$ with the
Frobenius inner product $\langle X,Y\rangle :=\tr(X^TY)$. The
Frobenius norm is defined by
$\|X\|_F=\Big(\tr(X^T X)\Big)^\frac{1}{2}$. For
$X,Y\in {\Bbb S}(d,\R),$ we write $X\leq Y$ if $Y-X$ is positive
semidefinite, and $X<Y$ if $Y-X$ is positive definite. Note that
$X\leq Y$ if and only if $\langle x,Xx\rangle\leq \langle
x,Yx\rangle$ for all $x\in {\Bbb R}^d.$ We denote $[X,Y]$ by the
L\"owner order interval $[X,Y]:=\{Z:X\leq Z\leq Y\}.$
\begin{theorem}
\label{thm: Gaussian}
Assume that $\{\mu_i\}$ are Gaussian distributions with mean zero and covariance matrix $A_i$, $\mu_i\sim \N(0,A_i)$ for $i=1,\ldots, n$. The regularization of barycenters problem \eqref{eq: penalization} has a unique solution $\mu\sim \N(0,X)$ where the covariance matrix $X$ solves the following nonlinear matrix equation
\begin{equation}
\label{eq: optimal equation Gaussian}
X-\gamma I=\sum_{i=1}^n \lambda_i (X^\frac{1}{2} A_i X^{1/2})^\frac{1}{2}.
\end{equation}
In particular, in the scalar case ($d=1$), we obtain
\begin{equation}
\label{eq: 1D}
X=\frac{\left[\sum_{i=1}^n\lambda_i A_i^\frac{1}{2}+\Big(\big(\sum_{i=1}^n\lambda_i A_i^\frac{1}{2}\big)^2+4\gamma\Big)^\frac{1}{2}\right]^2}{4}.
\end{equation}
\end{theorem}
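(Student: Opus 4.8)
The plan is to reduce \eqref{eq: penalization Gaussian} to a finite-dimensional convex program over the open cone ${\Bbb S}(d,\R)_+$ and to solve it by convex analysis. Writing a candidate minimizer as $\mu\sim\N(0,X)$ with $X\in{\Bbb S}(d,\R)_+$ and inserting the explicit formulas \eqref{eq: W2-Gaussians} (with zero means) and \eqref{eq: entropy of Gaussian measures}, the objective in \eqref{eq: penalization Gaussian} equals, up to an additive constant not depending on $X$,
\[
J(X):=\tfrac12\,\tr X-\sum_{i=1}^n\lambda_i\,\tr\sqrt{A_i^{1/2}XA_i^{1/2}}-\tfrac{\gamma}{2}\ln\det X .
\]
It therefore suffices to show that $J$ has a unique minimizer on ${\Bbb S}(d,\R)_+$ and that this minimizer is exactly the solution of \eqref{eq: optimal equation Gaussian}.

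First I would prove that $J$ is strictly convex (this is also the strict convexity recorded in Theorem \ref{thm: main theorem 1}, and it yields uniqueness once existence is shown). The term $\tfrac12\tr X$ is linear, $-\tfrac{\gamma}{2}\ln\det X$ is strictly convex on ${\Bbb S}(d,\R)_+$ because $\gamma>0$, and for each $i$ the map $X\mapsto\tr\sqrt{A_i^{1/2}XA_i^{1/2}}$ is concave, being the composition of the linear map $X\mapsto A_i^{1/2}XA_i^{1/2}$ (which preserves positive definiteness) with $Y\mapsto\tr\sqrt{Y}$, and $Y\mapsto\tr\sqrt{Y}$ is concave on the positive definite cone since $t\mapsto\sqrt t$ is concave. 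Hence $J$ is strictly convex.

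Next, existence. I would show that $J$ is coercive on the open cone, that is, $J(X_k)\to+\infty$ whenever $\det X_k\to0$ or $\|X_k\|_F\to\infty$. If $\det X_k\to0$ along a bounded sequence, then $-\tfrac{\gamma}{2}\ln\det X_k\to+\infty$ while the other two terms remain bounded. If $\|X_k\|_F\to\infty$, then $\tr X_k\to\infty$ (comparable to $\|X_k\|_F$ on positive definite matrices), while by Cauchy--Schwarz $\tr\sqrt{A_i^{1/2}X_kA_i^{1/2}}\le\sqrt d\,\sqrt{\tr(A_iX_k)}=O\big((\tr X_k)^{1/2}\big)$ and by the arithmetic--geometric mean inequality $\ln\det X_k\le d\ln\big(\tfrac1d\tr X_k\big)$, so the linear term $\tfrac12\tr X_k$ dominates. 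Consequently the sublevel sets of $J$ are compact subsets of ${\Bbb S}(d,\R)_+$, a minimizer $X$ exists in the interior, and it is unique by strict convexity.

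Finally, I would compute $\nabla J$ and simplify the stationarity condition. Since $A_i^{1/2}XA_i^{1/2}>0$ for $X>0$, the map $X\mapsto\tr\sqrt{A_i^{1/2}XA_i^{1/2}}$ is smooth with gradient $\tfrac12 A_i^{1/2}(A_i^{1/2}XA_i^{1/2})^{-1/2}A_i^{1/2}$, so (with $\nabla\tr X=I$ and $\nabla\ln\det X=X^{-1}$ for the Frobenius inner product) the equation $\nabla J(X)=0$ reads
\[
I=\gamma X^{-1}+\sum_{i=1}^n\lambda_i\,A_i^{1/2}\big(A_i^{1/2}XA_i^{1/2}\big)^{-1/2}A_i^{1/2}.
\]
Conjugating this by $X^{1/2}$ on both sides and using the polar-decomposition identity $C(C^TC)^{-1/2}C^T=(CC^T)^{1/2}$ with $C=X^{1/2}A_i^{1/2}$, so that $C^TC=A_i^{1/2}XA_i^{1/2}$ and $CC^T=X^{1/2}A_iX^{1/2}$, converts it to $X=\gamma I+\sum_i\lambda_i(X^{1/2}A_iX^{1/2})^{1/2}$, which is \eqref{eq: optimal equation Gaussian}; conversely any $X\in{\Bbb S}(d,\R)_+$ solving \eqref{eq: optimal equation Gaussian} automatically satisfies $X\ge\gamma I>0$ and is a critical point of the strictly convex $J$, hence its unique minimizer. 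For $d=1$, \eqref{eq: optimal equation Gaussian} reads $x-\gamma=\sqrt x\sum_i\lambda_i\sqrt{a_i}$; setting $s=\sqrt x>0$ gives $s^2-\big(\sum_i\lambda_i\sqrt{a_i}\big)s-\gamma=0$, and squaring its positive root yields \eqref{eq: 1D}. The step I expect to be the main obstacle is the matrix analysis: the (spectral) concavity of the square root needed for strict convexity and, above all, the algebraic reduction of the stationarity equation to the closed form \eqref{eq: optimal equation Gaussian} via the polar-decomposition identity; the coercivity estimates are routine.
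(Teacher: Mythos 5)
Your proposal is correct, and it follows the paper's overall reduction: both arguments use \eqref{eq: W2-Gaussians} and \eqref{eq: entropy of Gaussian measures} to turn \eqref{eq: penalization Gaussian} into a minimization over ${\Bbb S}(d,\R)_+$, compute the Frobenius gradient, and show that stationarity is algebraically equivalent to \eqref{eq: optimal equation Gaussian} (the paper phrases this step through the geometric mean $A_i\sharp X^{-1}$ and cites \cite{Bhatia2018} for $Df_1$, while you differentiate $\tr\sqrt{A_i^{1/2}XA_i^{1/2}}$ directly and use the polar-decomposition identity $C(C^TC)^{-1/2}C^T=(CC^T)^{1/2}$ — the same algebra in different packaging). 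Where you genuinely diverge is the existence step: the paper proves in Lemma \ref{lem: existence Gaussian} that the matrix equation itself has a positive definite solution, via Brouwer's fixed point theorem applied to $X\mapsto\sum_i\lambda_i(X^{1/2}A_iX^{1/2})^{1/2}+\gamma I$ on an explicit L\"owner interval $[\alpha_*I,\beta_*I]$, and then invokes strict convexity; you instead prove coercivity of the objective (sublevel sets compact inside the open cone), obtain the minimizer directly, and read off the equation from stationarity. Your route is more self-contained — no fixed-point theorem, and no appeal to \cite{Bhatia2018} for strict convexity of $f_1$, since with $\gamma>0$ concavity of $X\mapsto\tr\sqrt{A_i^{1/2}XA_i^{1/2}}$ plus strict convexity of $-\ln\det$ already gives strictness (note this shortcut does use $\gamma>0$; at $\gamma=0$ one needs the strict convexity of $f_1$ as in the paper). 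What the paper's fixed-point argument buys in exchange is an explicit localization $\alpha_*I\le X\le\beta_*I$ of the solution, which is reused later (the projection box in the GPM and the Lipschitz bounds on $[\alpha I,\beta I]$), and a template that transfers to the $q$-Gaussian case (Proposition \ref{prop: existence solution q Gaussians}), where the extra determinant term makes both convexity and coercivity more delicate.
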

Before proving this theorem, we show the existence of solutions to equation \eqref{eq: optimal equation Gaussian}.
\begin{lemma}
\label{lem: existence Gaussian}
Equation \eqref{eq: optimal equation Gaussian}
 has a positive definite solution.
\end{lemma}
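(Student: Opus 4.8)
The plan is to recast the matrix equation $X - \gamma I = \sum_{i=1}^n \lambda_i (X^{1/2} A_i X^{1/2})^{1/2}$ as a fixed-point problem on a suitable compact convex set of positive definite matrices and invoke Brouwer's fixed-point theorem. Define the map $\Phi(X) := \gamma I + \sum_{i=1}^n \lambda_i (X^{1/2} A_i X^{1/2})^{1/2}$ on ${\Bbb S}(d,\R)_+$. Since $X \mapsto X^{1/2}$ is continuous on positive semidefinite matrices and $Y \mapsto Y^{1/2}$ likewise, $\Phi$ is continuous; moreover $\Phi(X) \geq \gamma I > 0$, so the image lands in the positive definite cone. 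A solution of \eqref{eq: optimal equation Gaussian} is exactly a fixed point of $\Phi$.

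The key step is to produce a Löwner order interval $[\alpha I, \beta I]$ that $\Phi$ maps into itself. For the lower bound, note $\Phi(X) \geq \gamma I$ always, so $\alpha = \gamma$ works provided we also check $\Phi(X) \geq \gamma I$ is compatible with the upper bound. For the upper bound, I would use the standard inequality $\tr\!\big((X^{1/2} A_i X^{1/2})^{1/2}\big) \leq \big(\tr(X^{1/2} A_i X^{1/2})\big)^{1/2} d^{1/2}$ — or more cleanly, the operator-norm/eigenvalue estimate $\|(X^{1/2} A_i X^{1/2})^{1/2}\| \leq \|X\|^{1/2}\|A_i\|^{1/2}$, which gives $\|\Phi(X)\| \leq \gamma + \|X\|^{1/2}\sum_i \lambda_i \|A_i\|^{1/2}$. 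Writing $M := \sum_i \lambda_i \|A_i\|^{1/2}$, the scalar inequality $\gamma + M t^{1/2} \leq t$ holds for $t \geq \beta$ where $\beta$ is the larger root of $t - M t^{1/2} - \gamma = 0$, i.e. $\beta = \big(\tfrac{M + \sqrt{M^2 + 4\gamma}}{2}\big)^2$. Hence $\Phi$ maps the convex compact set $K := \{X : \gamma I \leq X \leq \beta I\}$ into itself: the lower bound is immediate and the upper bound follows since $\Phi(X) \leq \|\Phi(X)\| I \leq \beta I$ on $K$. Brouwer then yields a fixed point $X \in K$, which is the desired positive definite solution.

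The main obstacle is the upper-bound step: one must be careful that the naive bound $\|(X^{1/2}A_iX^{1/2})^{1/2}\| \leq \|X\|^{1/2}\|A_i\|^{1/2}$ is genuinely correct (it follows from $\|X^{1/2}A_iX^{1/2}\| \leq \|X\|\,\|A_i\|$ and submultiplicativity of the operator norm together with $\|Y^{1/2}\| = \|Y\|^{1/2}$ for $Y \geq 0$), and that the self-map property of the Löwner interval is checked for the matrix order, not merely for norms — here it is legitimate because $\Phi(X)$ is symmetric and any symmetric matrix $S$ with $\|S\| \leq \beta$ satisfies $S \leq \beta I$. An alternative, if one prefers to avoid Brouwer, is to exploit monotonicity: the map $X \mapsto (X^{1/2} A_i X^{1/2})^{1/2}$ is operator monotone in $X$ (a known consequence of Ando's results), so $\Phi$ is order-preserving, and one can then run a monotone iteration starting from $\beta I$ downward (or $\gamma I$ upward), with convergence guaranteed by monotone boundedness in the Löwner order; the limit solves \eqref{eq: optimal equation Gaussian}. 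I would present the Brouwer argument as the primary proof since it is shortest, and remark that the monotone-iteration viewpoint also gives a constructive scheme consistent with the algorithm developed later in Section \ref{sec: GPM}.
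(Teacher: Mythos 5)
Your main argument is correct and is essentially the paper's proof: both recast \eqref{eq: optimal equation Gaussian} as a fixed-point problem for the continuous map $X\mapsto \gamma I+\sum_{i=1}^n\lambda_i\bigl(X^{1/2}A_iX^{1/2}\bigr)^{1/2}$, exhibit a L\"owner order interval that this map sends into itself, and invoke Brouwer. The only real difference is the choice of interval: you take $[\gamma I,\beta I]$ with a norm-based upper bound, while the paper takes $[\alpha_* I,\beta_* I]$ built from two-sided bounds $\alpha_0 I\le A_i\le \beta_0 I$; note that your lower bound yields a \emph{positive definite} fixed point only because $\gamma>0$ in this section, whereas the paper's lower bound $\alpha_*\ge\alpha_0>0$ would also survive $\gamma=0$.

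One caution about your closing aside: the map $X\mapsto (X^{1/2}AX^{1/2})^{1/2}$ is \emph{not} operator monotone in $X$, so the proposed monotone iteration is not justified as stated. For instance, with $A=e_1e_1^{T}$ (the projection onto the first coordinate), $X=I$ and $Y=I+3uu^{T}$ with $u=(1,1)^{T}/\sqrt{2}$, one checks that $(Y^{1/2}AY^{1/2})^{1/2}-(X^{1/2}AX^{1/2})^{1/2}$ is not positive semidefinite, and a small perturbation makes $A$ positive definite. What is true, and what is typically used in this context, is monotonicity of $X\mapsto (A^{1/2}XA^{1/2})^{1/2}$ (by congruence plus operator monotonicity of the square root) and hence of the trace $\tr\bigl(X^{1/2}AX^{1/2}\bigr)^{1/2}=\tr\bigl(A^{1/2}XA^{1/2}\bigr)^{1/2}$. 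Since this was offered only as an optional alternative, your primary Brouwer argument stands.
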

\begin{proof}
 Pick
$0<\alpha_0<\beta_0$ so that $\alpha_0 I\leq A_i\leq \beta_0 I$ for
all $i=1,\dots, n.$ Set
$$
\alpha_*:=\left(\frac{\sqrt{\alpha_0}+\sqrt{\alpha_0+4\gamma}}{2}\right)^2,
\qquad \beta_*:=\left(\frac{\sqrt{\beta_0}+\sqrt{\beta_0+4\gamma}}{2}\right)^2.
$$
Then for matrices $X$ satisfying $\alpha_* I\leq X\leq \beta_* I$ we have,
$$\alpha_0 X\leq X^{1/2}A_iX^{1/2}\leq \beta_0 I, \qquad i=1,\dots,
n$$ and hence
$$\sqrt{\alpha_0}\sqrt{\alpha_*} I\leq \sqrt{\alpha_0}X^{1/2}\leq
(X^{1/2}A_iX^{1/2})^{1/2}\leq \sqrt{\beta_0}X^{1/2}\leq
\sqrt{\beta_0}\sqrt{\beta_*}I.$$ By definition of $\alpha_*$ and
$\beta_*$,
\begin{eqnarray*}
\alpha_* I&=&\sqrt{\alpha_0}\sqrt{\alpha_*}I+\gamma
I\leq\sum_{i=1}^n\lambda_i(X^{1/2}A_iX^{1/2})^{1/2}+\gamma I\\
&\leq&\sqrt{\beta_0}\sqrt{\beta_*}I+\gamma I=\beta_* I
\end{eqnarray*}
for every $X\in [\alpha_* I,\beta_* I]:=\{Z: \alpha_* I\leq Z\leq
\beta_* I\}.$ This shows that the map
$$
f(X):=\sum_{i=1}^n\lambda_i(X^{1/2}A_iX^{1/2})^{1/2}+\gamma I
$$ 
is a continuous self map on the L\"owner order interval $[\alpha_*
I,\beta_* I].$ By Brouwer's fixed point theorem, it has a fixed
point.
\end{proof}
We are now ready to prove Theorem \ref{thm: Gaussian}
\begin{proof}[Proof of Theorem \ref{thm: Gaussian}]
According to \eqref{eq: W2-Gaussians} and \eqref{eq: entropy of Gaussian measures} we have
\begin{align*}
W_2^2(\mu_i, \mu)&=\tr X+\tr A_i-2\tr\Big(A_i^\frac{1}{2}XA_i^\frac{1}{2}\Big)^\frac{1}{2},
\\ F(\mu)&=-\frac{d}{2}\ln(2\pi e)-\frac{1}{2}\ln(\det X).
\end{align*}
Thus we can write \eqref{eq: penalization} as a minimization problem in the space of symmetric positive definite matrices
\begin{equation}
\label{eq: penlization2}
\min_{X\in \mathbb{S}(d,\mathbb{R})_+} \frac{1}{2} f(X)
\end{equation}
where
\begin{align}
\label{eq: f}
f(X)&:=\sum_{i=1}^n \lambda_i\tr A_i+\sum_{i=1}^n \lambda_i \tr\Big(X-2\big(A_i^\frac{1}{2}X A_i^\frac{1}{2}\big)^\frac{1}{2}\Big)
-\gamma\ln \det(X)-\gamma d \ln(2\pi e)\notag
\\&:=f_1(X)+\gamma f_2(X),
\end{align}
where
\begin{align*}
&f_1(X)=\sum_{i=1}^n \lambda_i\tr A_i+\sum_{i=1}^n \lambda_i \tr\Big(X-2\big(A_i^\frac{1}{2}X A_i^\frac{1}{2}\big)^\frac{1}{2}\Big),
\\& f_2(X)=-\ln \det(X)-d \ln(2\pi e).
\end{align*}
It has been proved \cite{Bhatia2018} that
\begin{enumerate}[(i)]
\item $X\mapsto f_1(X)$ is strictly convex,
\item $Df_1(X)(Y)=\tr\Big(I-\sum_{i=1}^n \lambda_i (A_i\sharp X^{-1})\Big) Y$,
\end{enumerate}
where $A\sharp B$ denotes the  geometric mean between $A$ and $B$ defined by
\begin{equation}
\label{eq: geometric mean}
A\sharp B=A^{1/2} (A^{-1/2}BA^{-1/2})^{1/2}A^{1/2},
\end{equation}
which is symmetric in $A$ and $B$. According to \cite[Proof of
Theorem 8, Chapter 10]{Lax2007} $X\mapsto
-\ln\det(X)$ is strictly convex. Using Jacobi's formula for the
derivative of the determinant and the chain rule, we get
\begin{equation*}
Df_2(X)(Y)=-\frac{d}{dt}\ln\det(X+\varepsilon Y)\Big\vert_{t=0}=-\frac{1}{\det X}\cdot\det X \cdot\tr (X^{-1}Y)=-\tr (X^{-1}Y).
\end{equation*}
It follows that $X\mapsto f(X)$ is strictly convex. Furthermore, we have
\begin{equation*}
Df(X)(Y)=\tr\Big(I-\gamma X^{-1}-\sum_{i=1}^n \lambda_i (A_i\sharp X^{-1})\Big) Y.
\end{equation*}
From this we deduce that
\begin{equation}
\label{eq: nabla f} \nabla f(X)=I-\gamma X^{-1}-\sum_{i=1}^n
\lambda_i (A_i\sharp X^{-1}),
\end{equation}
where the gradient is with respect to the Frobenius inner product. Hence $\nabla f(X)=0$ if and only if
\begin{equation*}
I-\gamma X^{-1}=\sum_{i=1}^n \lambda_i (A_i\# X^{-1}).
\end{equation*}
Using the definition \eqref{eq: geometric mean} of the geometric mean, the above equation can be written as
\begin{equation*}
 X-\gamma I=\sum_{i=1}^n \lambda_i (X^\frac{1}{2} A_i
 X^{1/2})^\frac{1}{2},
\end{equation*}
which is equation \eqref{eq: optimal equation Gaussian}.  By Lemma \ref{lem: existence Gaussian} this equation has a positive definite solution. This together with the strict convexity of $f$ imply that $f$ has a unique minimizer which is a Gaussian measure $\N(0,X)$ where $X$ solves \eqref{eq: optimal equation Gaussian}. In the one dimensional case this equation reads
$$
X-\gamma=\sqrt{X}\sum_{i=1}^n\lambda_i\sqrt{a_i},
$$
which results in
$$
X=\frac{\left[\sum_{i=1}^n\lambda_i a_i^\frac{1}{2}+\Big(\big(\sum_{i=1}^n\lambda_i a_i^\frac{1}{2}\big)^2+4\gamma\Big)^\frac{1}{2}\right]^2}{4}.
$$
This completes the proof of the theorem.
\end{proof}
\begin{remark}[The case of non-zero mean distributions]
\label{rem: non-zero gaussians}
Assume that $\{\mu_i\}$ are Gaussian distributions with means $\{m_i\}$ and covariance matrices $\{A_i\}$, that is $\mu_i\sim \mathcal{N}(m_i,A_i)$. Using the following formulas of the Wasserstein distances 
$$
W_2^2(\mu_i,\mu)=\|m-m_i\|^2+\tr X+\tr A_i-2\tr\Big(A_i^\frac{1}{2}XA_i^\frac{1}{2}\Big)^\frac{1}{2},
$$
and the formula of the entropy functional \eqref{eq: entropy of Gaussian measures} (noting that the entropy of a normal distribution is independent of its mean), we deduce that the minimizer $\mu\sim \mathcal{N}(m,X)$ where the mean $m$ is given by
$$
m=\sum_{i=1}^n\lambda_i m_i,
$$
and the covariance matrix $X$ satisfies the nonlinear matrix equation \eqref{eq: optimal equation Gaussian}. The above statement about the mean is also true for the case of $q$-Gaussian measures and $\varphi$-exponential measures in the subsequent sections.
\end{remark}
\section{Regularization of barycenters for $q$-Gaussian measures}
\label{sec: q-Gaussians}
In this section we study the following regularization of barycenters in the space of $q$-Gaussian measures
\begin{equation}
\label{eq: penalization q-Gaussian}
\min_{\mu\in\P_2(\R^d)}\sum_{i=1}^n\frac{1}{2}\lambda_i W_2^2(\mu,\mu_i)+\gamma F_q(\mu),
\end{equation} where $\mu_i\sim G_q(0,A_i)$ ($i=1,\ldots, n$), $F_q$ is the Tsallis entropy for a probability measure $\mu=\mu(x)dx$ on $\R^d$ defined by
\begin{equation}
\label{eq: Tsallis entropy}
F_q(\mu):=\int_{\R^d} \mu(x) \log_q \mu(x)\,dx.
\end{equation}
According to Proposition \ref{prop: invariant}, we only need to seek for the minimizer $\mu$ among $q$-Gaussian measures with mean zero, that is $\mu\sim G_q(0,X)$ for some covariance matrix $X$.
\begin{theorem}
\label{thm: q-Gaussian}
Assume that $\mu_i\sim G_q(0,A_i)$. Suppose that $\alpha I\leq A_i \leq \beta I$ for all $i=1,\ldots, n$.
The regularization of barycenters problem \eqref{eq: penalization q-Gaussian} has a unique solution $\mu\sim G_q(0,X)$ for all $\gamma\geq 0$ if either $0< q \leq 1$ or $1<q\leq 1+\frac{2\alpha^2}{d \beta^2}$ and for $\gamma$ sufficiently small if $1+\frac{2\alpha^2}{d \beta^2}<q<\frac{d+4}{d+2}$. The covariance matrix $X$ solves the following nonlinear matrix equation
\begin{equation}
\label{eq: optimal equation q-Gaussian}
X-\gamma m(q,d)(\det X)^{\frac{q-1}{2}} I=\sum_{i=1}^n \lambda_i \Big(X^\frac{1}{2}A_i X^\frac{1}{2}\Big)^\frac{1}{2},
\end{equation}
where $m(q,d)$ is defined by
\begin{equation*}
m(q,d):=\frac{2(2-q)C_0(q,d)^{1-q}}{2+(d+2)(1-q)}.
\end{equation*}
\end{theorem}
The following proposition shows that equation \eqref{eq: optimal equation q-Gaussian} possesses a positive definite solution.
\begin{proposition}
\label{prop: existence solution q Gaussians}
 Equation \eqref{eq: optimal equation q-Gaussian} has a positive definite solution.
\end{proposition}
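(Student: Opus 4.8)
The plan is to imitate the proof of Lemma~\ref{lem: existence Gaussian}. Fix $0<\alpha_0<\beta_0$ with $\alpha_0 I\le A_i\le\beta_0 I$ for all $i$ and introduce
\[
f(X):=\sum_{i=1}^n\lambda_i\bigl(X^{1/2}A_iX^{1/2}\bigr)^{1/2}+\gamma\, m(q,d)\,(\det X)^{(q-1)/2}I,
\]
which is continuous on ${\Bbb S}(d,\R)_+$ (the matrix square root and the determinant are continuous, and $\det X>0$ on the domain) and satisfies $f(X)=X$ precisely when $X$ solves \eqref{eq: optimal equation q-Gaussian}. Since $q<\tfrac{d+4}{d+2}<2$ one checks that the denominator in \eqref{eq: m} is positive and hence $m(q,d)>0$, so the extra term is a positive multiple of the identity. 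I would then exhibit a L\"owner interval $[\alpha_* I,\beta_* I]$ that $f$ maps into itself and conclude by Brouwer's fixed point theorem, exactly as in Lemma~\ref{lem: existence Gaussian}.

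For the self-map property, on $[\alpha_* I,\beta_* I]$ we have the bound $\sqrt{\alpha_0\alpha_*}\,I\le (X^{1/2}A_iX^{1/2})^{1/2}\le\sqrt{\beta_0\beta_*}\,I$ from Lemma~\ref{lem: existence Gaussian}, together with $\alpha_*^{d}\le\det X\le\beta_*^{d}$, so that $(\det X)^{(q-1)/2}$ lies between $\alpha_*^{d(q-1)/2}$ and $\beta_*^{d(q-1)/2}$, the two endpoints exchanging roles according to the sign of $q-1$. It therefore suffices to pick $\alpha_*,\beta_*$ so that
\[
\sqrt{\beta_0\beta_*}+\gamma\, m(q,d)\max\{\alpha_*^{d(q-1)/2},\beta_*^{d(q-1)/2}\}\le\beta_*,\qquad
\sqrt{\alpha_0\alpha_*}+\gamma\, m(q,d)\min\{\alpha_*^{d(q-1)/2},\beta_*^{d(q-1)/2}\}\ge\alpha_*.
\]
The lower estimate is immediate for any $\alpha_*\le\alpha_0$, since then $\sqrt{\alpha_0\alpha_*}\ge\alpha_*$ already. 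For the upper estimate the key point is that $q<\tfrac{d+4}{d+2}$ forces $d(q-1)/2<\tfrac{d}{d+2}<1$: when $q\ge1$ the relevant power is $\beta_*^{d(q-1)/2}=o(\beta_*)$, so the inequality holds once $\beta_*$ is large; when $q<1$ the relevant power is $\alpha_*^{d(q-1)/2}$, which stays bounded provided $\alpha_*$ is kept bounded away from $0$, so one first fixes $\beta_*$ large and then chooses $\alpha_*\in(0,\alpha_0]$ not too small. In all cases both inequalities can be met simultaneously, so $f$ is a continuous self-map of the compact convex set $[\alpha_* I,\beta_* I]$ and Brouwer's theorem gives a fixed point $X\in{\Bbb S}(d,\R)_+$; the case $q=1$, where $(\det X)^{(q-1)/2}\equiv1$, recovers Lemma~\ref{lem: existence Gaussian}.

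The only genuine work is bookkeeping the sign of $q-1$ in these elementary estimates; there is no conceptual obstacle. In particular, no smallness assumption on $\gamma$ is needed for mere solvability of \eqref{eq: optimal equation q-Gaussian} — the restrictions on $\gamma$ appearing in Theorem~\ref{thm: q-Gaussian} come from the convexity analysis required for uniqueness and for the solution to be the global minimizer, not from this existence statement.
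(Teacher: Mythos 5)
Your proposal is correct, and while it follows the paper's overall strategy (show that $\psi(X)=\sum_i\lambda_i(X^{1/2}A_iX^{1/2})^{1/2}+\gamma m(q,d)(\det X)^{(q-1)/2}I$ maps a L\"owner interval $[\alpha_* I,\beta_* I]$ into itself and apply Brouwer), the way you produce the interval is genuinely simpler. The paper constructs $\alpha_*,\beta_*$ as \emph{exact} solutions of auxiliary scalar equations when $1<q<\tfrac{d+4}{d+2}$ (two subcases, via the intermediate value theorem) and, when $0<q<1$, as a solution of a coupled two-dimensional system $\alpha_*=\sqrt{\alpha_0\alpha_*}+\gamma m(q,d)\beta_*^{d(q-1)/2}$, $\beta_*=\sqrt{\beta_0\beta_*}+\gamma m(q,d)\alpha_*^{d(q-1)/2}$, which itself requires a separate Brouwer argument in $\R^2$. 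You observe instead that only \emph{inequalities} are needed for the self-map property: the lower bound is automatic once $\alpha_*\le\alpha_0$ (so $\sqrt{\alpha_0\alpha_*}\ge\alpha_*$ and the penalization term only helps), and the upper bound follows from the sublinearity $d(q-1)/2<\tfrac{d}{d+2}<1$, taking $\beta_*$ large (with $\alpha_*^{d(q-1)/2}$ kept bounded in the case $q<1$ by keeping $\alpha_*$ away from $0$, e.g.\ $\alpha_*=\alpha_0$ — note you should fix $\alpha_*$ first, or simply set $\alpha_*=\alpha_0$, since the required size of $\beta_*$ depends on it; this is a cosmetic reordering, not a gap). This eliminates the case analysis on $d(q-1)\lessgtr1$ and the auxiliary $2$D fixed-point argument, at the cost of producing a cruder (non-sharp) invariant interval, whereas the paper's endpoints satisfy the scalar analogues of the matrix equation exactly. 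Your closing remark is also consistent with the paper: existence holds for every $\gamma\ge0$, and the restrictions on $\gamma$ in Theorem~\ref{thm: q-Gaussian} enter only through the convexity analysis of Proposition~\ref{prop: convexity}.
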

\begin{proof} Similarly as the proof of Lemma \ref{lem: existence Gaussian} we will also apply Brouwer's fixed point theorem. We will show that
$$
\psi(X):=\sum_{i=1}^n \lambda_i(X^{1/2}A_iX^{1/2})^{1/2}+\gamma m(q,d)(\det X)^{\frac{q-1}{2}} I
$$
has a fixed point which is a positive definite matrix. Due to the appearance of the second term on the left-hand side of \eqref{eq: optimal equation q-Gaussian} the proof of this proposition is significantly involved than that of Lemma \ref{lem: existence Gaussian}. Suppose that $\alpha_0 I\leq A_i \leq \beta_0 I$ for all $i=1,\ldots, n$. Then similarly as in the proof of Lemma \ref{lem: existence Gaussian}, for $\alpha_* I\leq X\leq \beta_* I$ (with $\alpha_*, \beta_*$ chosen later), we have
$$
\sqrt{\alpha_0}\sqrt{\alpha_*} I\leq \sqrt{\alpha_0}X^{1/2}\leq
(X^{1/2}A_iX^{1/2})^{1/2}\leq \sqrt{\beta_0}X^{1/2}\leq
\sqrt{\beta_0}\sqrt{\beta_*}I,\quad i=1,\ldots, n,
$$
so that
$$
\sqrt{\alpha_0}\sqrt{\alpha_*} I\leq (X^{1/2}A_iX^{1/2})^{1/2}\leq \sqrt{\beta_0}\sqrt{\beta_*}I.
$$
Multiplying this inequality with $\lambda_i$ then adding them together, noting that $\sum\lambda_i=1$, we obtain
\begin{equation*}
\sqrt{\alpha_0}\sqrt{\alpha_*} I\leq \sum_{i=1}^n \lambda_i(X^{1/2}A_iX^{1/2})^{1/2}\leq \sqrt{\beta_0}\sqrt{\beta_*}I,
\end{equation*}
from which it follows that
\begin{multline}
\label{eq: estimate1}
\sqrt{\alpha_0}\sqrt{\alpha_*} I+\gamma m(q,d)(\det X)^{\frac{q-1}{2}} I\leq \sum_{i=1}^n \lambda_i(X^{1/2}A_iX^{1/2})^{1/2}+\gamma m(q,d)(\det X)^{\frac{q-1}{2}} I\\\leq \sqrt{\beta_0}\sqrt{\beta_*}I+\gamma m(q,d)(\det X)^{\frac{q-1}{2}} I.
\end{multline}
To continue we consider two cases.
\vskip 1pc

Case 1: $1<q<\frac{d+4}{d+2}$.
It follows from \eqref{eq: estimate1} that
\begin{multline}
\label{eq: estimate2}
\sqrt{\alpha_0}\sqrt{\alpha_*} I+\gamma m(q,d)\alpha_*^{\frac{d(q-1)}{2}} I\leq
\sqrt{\alpha_0}\sqrt{\alpha_*} I+\gamma m(q,d)(\det X)^{\frac{q-1}{2}} I\\
\leq \gamma m(q,d)(\det X)^{\frac{q-1}{2}} I+\sum_{i=1}^n \lambda_i(X^{1/2}A_iX^{1/2})^{1/2}\\\leq \sqrt{\beta_0}\sqrt{\beta_*}I+\gamma m(q,d)(\det X)^{\frac{q-1}{2}} I\leq \sqrt{\beta_0}\sqrt{\beta_*}I+\gamma m(q,d)\beta_*^{\frac{d(q-1)}{2}} I.
\end{multline}
Since $1<q<\frac{d+4}{d+2}$, we have $0<(q-1)d<\frac{2d}{d+2}<2$.
\vskip 1pc
Case 1.1: $d(q-1)\leq 1$. Consider the following equation
$$
g_1(t):=t^{1-\frac{q(d-1)}{2}}-\sqrt{\alpha_0}t^{\frac{1-d(q-1)}{2}}-\gamma m(q,d)=0.
$$
We have $\lim\limits_{t\rightarrow 0}g_1(t)=-\gamma m(q,d)<0$ and $\lim\limits_{t\rightarrow +\infty}g_1(t)=+\infty$. Since $g_1$ is continuous, it follows that there exists $\alpha_*\in (0,\infty)$ such that $g_1(\alpha_*)=0$, that is
$$
\alpha_*^{1-\frac{q(d-1)}{2}}=\sqrt{\alpha_0}\alpha*^{\frac{1-d(q-1)}{2}}+\gamma m(q,d), \quad\text{i.e.,}\quad \alpha_*=\sqrt{\alpha_0}\sqrt{\alpha_*}+\gamma m(q,d)\alpha_*^{\frac{d(q-1)}{2}}.
$$
Similarly by considering the function $g_2(t):=t^{1-\frac{q(d-1)}{2}}-\sqrt{\beta_0}t^{\frac{1-d(q-1)}{2}}-\gamma m(q,d)$, we deduce that there exists $\beta_*\in (0,\infty)$ such that
$$
\beta_*=\sqrt{\beta_0}\sqrt{\beta_*}+\gamma m(q,d)\beta_*^{\frac{d(q-1)}{2}}.
$$
\vskip 1pc
Case 1.2: $d(q-1)>1$. Using the same argument as in the previous case for
$$
g_3(t)=t^{1/2}-\sqrt{\alpha_0}-\gamma m(q,d)t^{\frac{d(q-1)-1}{2}}~\text{and}~g_4(t)=t^{1/2}-\sqrt{\beta_0}-\gamma m(q,d)t^{\frac{d(q-1)-1}{2}}
$$
we can show that there exist $\alpha_*, \beta_*\in (0,\infty)$ such that
$$
\alpha_*=\sqrt{\alpha_0}\sqrt{\alpha_*}+\gamma m(q,d)\alpha_*^{\frac{d(q-1)}{2}}~\text{and}~\beta_*=\sqrt{\beta_0}\sqrt{\beta_*}+\gamma m(q,d)\beta_*^{\frac{d(q-1)}{2}}.
$$
\vskip 1pc
Therefore in both Cases 1.1 and 1.2, there exist  $\alpha_*, \beta_*\in (0,\infty)$ such that
$$
\alpha_*=\sqrt{\alpha_0}\sqrt{\alpha_*}+\gamma m(q,d)\alpha_*^{\frac{d(q-1)}{2}}~\text{and}~\beta_*=\sqrt{\beta_0}\sqrt{\beta_*}+\gamma m(q,d)\beta_*^{\frac{d(q-1)}{2}}.
$$
Substituting these quantities into \eqref{eq: estimate2} we obtain
\begin{multline}
\alpha_* I=\sqrt{\alpha_0}\sqrt{\alpha_*} I+\gamma m(q,d)\alpha_*^{\frac{d(q-1)}{2}} I \\\leq \gamma m(q,d)(\det X)^{\frac{q-1}{2}} I+\sum_{i=1}^n \lambda_i(X^{1/2}A_iX^{1/2})^{1/2} \\
\leq \sqrt{\beta_0}\sqrt{\beta_*}I+\gamma m(q,d)\beta_*^{\frac{d(q-1)}{2}} I=\beta_* I.
\end{multline}
Thus $\alpha_* I \leq \psi(X)\leq \beta_* I$.  By Brouwer's fixed point theorem, $\psi(X)$ has a fixed point in $[\alpha_* I, \beta_* I]$ as desired.
\vskip 1pc

Case 2. $0<q<1$.
\vskip 1pc

It follows from \eqref{eq: estimate1} that
\begin{multline}
\label{eq: estimate3}
\sqrt{\alpha_0}\sqrt{\alpha_*} I+\gamma m(q,d)\beta_*^{\frac{d(q-1)}{2}} I\leq
\sqrt{\alpha_0}\sqrt{\alpha_*} I+\gamma m(q,d)(\det X)^{\frac{q-1}{2}} I\\
\leq \gamma m(q,d)(\det X)^{\frac{q-1}{2}} I+\sum_{i=1}^n \lambda_i(X^{1/2}A_iX^{1/2})^{1/2}\\\leq \sqrt{\beta_0}\sqrt{\beta_*}I+\gamma m(q,d)(\det X)^{\frac{q-1}{2}} I\leq \sqrt{\beta_0}\sqrt{\beta_*}I+\gamma m(q,d)\alpha_*^{\frac{d(q-1)}{2}} I
\end{multline}
Next we will show that following system has positive solutions $0<\alpha_*<\beta_*<\infty$:
\begin{equation}
\label{eq: system1}
\begin{cases}
\alpha_*=\sqrt{\alpha_0}\sqrt{\alpha_*}+\gamma m(q,d)\beta_*^{\frac{d(q-1)}{2}} \\
\beta_*=\sqrt{\beta_0}\sqrt{\beta_*}+\gamma m(q,d)\alpha_*^{\frac{d(q-1)}{2}}.
\end{cases}
\end{equation}
Define $f: (0,\infty)^2\rightarrow (0,\infty)^2$ by
$$
f\left(\begin{pmatrix}
x\\y
\end{pmatrix}\right)=\begin{pmatrix}
\sqrt{\alpha_0}\sqrt{x}+\gamma m(q,d)y^{\frac{d(q-1)}{2}}\\
\sqrt{\beta_0}\sqrt{y}+\gamma m(q,d)x^{\frac{d(q-1)}{2}}
\end{pmatrix}
$$

Set
\begin{align*}
&a_*=\left(\frac{\sqrt{\alpha_0}+\sqrt{\alpha_0+4\gamma m(q,d)\beta_0^{(q-1)d/2}}}{2}\right)^2,
\\& b_*=\left(\frac{\sqrt{\beta_0}+\sqrt{\beta_0+4\gamma m(q,d)\alpha_0^{(q-1)d/2}}}{2}\right)^2.
\end{align*}
Thus $a_*$ and $b_*$ satisfy
$$
a_*=\sqrt{\alpha_0}\sqrt{a_*}+\gamma m(q,d)\beta_0^{(q-1)d/2},\quad b_*=\sqrt{\beta_0}\sqrt{b_*}+\gamma m(q,d)\alpha_0^{(q-1)d/2}.
$$
We now show that $f: [\alpha_0, a_*]\times [\beta_0, b_*]\rightarrow [\alpha_0, a_*]\times [\beta_0, b_*]$.
In fact, consider $\alpha_0\leq x\leq a_*$ and $\beta_0\leq y\leq b_*$. We have
\begin{align*}
&\alpha_0\leq \sqrt{\alpha_0}\sqrt{x}\leq \sqrt{\alpha_0}\sqrt{x}+\gamma m(q,d)y^{\frac{d(q-1)}{2}}\leq \sqrt{\alpha_0}\sqrt{x}+\gamma m(q,d)\beta_0^{\frac{d(q-1)}{2}}=a_*,\\
&\beta_0\leq \sqrt{\beta_0}\sqrt{y}\leq \sqrt{\beta_0}\sqrt{y}+\gamma m(q,d)x^{\frac{d(q-1)}{2}}\leq \sqrt{\beta_0}\sqrt{y}+\gamma m(q,d)\alpha_0^{\frac{d(q-1)}{2}}=b_*.\\
\end{align*}
Thus $f((x,y)^T)\in [\alpha_0, a_*]\times [\beta_0, b_*]$. By Brouwer's fixed point theorem, $f$ has a fixed point in $[\alpha_0, a_*]\times [\beta_0, b_*]$, which means that system \eqref{eq: system1} has a positive solution $(\alpha_*,\beta_*)$. Using this solution in \eqref{eq: estimate3} we obtain
\begin{multline*}
\alpha_* I=\sqrt{\alpha_0}\sqrt{\alpha_*} I+\gamma m(q,d)\beta_*^{\frac{d(q-1)}{2}} I \\ \leq \gamma m(q,d)(\det X)^{\frac{q-1}{2}} I+\sum_{i=1}^n \lambda_i(X^{1/2}A_iX^{1/2})^{1/2}\\ \leq  \sqrt{\beta_0}\sqrt{\beta_*}I+\gamma m(q,d)\alpha_*^{\frac{d(q-1)}{2}} I=\beta_* I.
\end{multline*}
Hence by Brouwer's fixed point theorem again, $\psi$ has a fixed point in $[\alpha_* I, \beta_* I]$ as desired. This finishes the proof of the proposition.
\end{proof}
Next we will show that the functional that we wish to mimimize in \eqref{eq: penalization q-Gaussian} is strictly convex under rather general conditions. According to Propositions \ref{prop: q-Gaussian properties} and Lemma \ref{lem: Tsallis entropy of q-Gaussian} we have
\begin{align*}
W_2^2(\mu_i, \mu)&=\tr X+\tr A_i-2\tr\Big(A_i^\frac{1}{2}XA_i^\frac{1}{2}\Big)^\frac{1}{2},
\\ F_q(\mu)&=-\frac{d}{2}C_1(q,d)+\Big[1-(1-q)\frac{d}{2}C_1(q,d)\Big]\ln_{q}\frac{C_0(q,d)}{(\det U)^\frac{1}{2}}.
\end{align*}
Therefore the minimization problem \eqref{eq: penalization q-Gaussian} can be written as
\begin{equation}
\label{eq: reformulation of penal. q Gaussian}
\min_{X\in \mathbb{S}(d,\mathbb{R})_+}\frac{1}{2} g(X)
\end{equation}
where
\begin{align}
\label{eq: g1}
g(X)&=\sum_{i=1}^n\lambda_i\tr A_i+\sum_{i=1}^n \lambda_i\tr\Big(X-2(A_i^\frac{1}{2}XA_i^\frac{1}{2})^\frac{1}{2}\Big)\notag
\\&\qquad+\gamma \Big[2-(1-q)d C_1(q,d)\Big]\ln_{q}\frac{C_0(q,d)}{(\det U)^\frac{1}{2}}-\gamma d C_1(q,d)\notag
\\&=f_1(X)+\gamma \Big[2-(1-q)d C_1(q,d)\Big]\ln_{q}\frac{C_0(q,d)}{(\det U)^\frac{1}{2}}-\gamma d C_1(q,d),
\end{align}
with $f_1(X)=\sum_{i=1}^n \lambda_i\tr A_i+\sum_{i=1}^n \lambda_i\tr\Big(X-2(A_i^\frac{1}{2}XA_i^\frac{1}{2})^\frac{1}{2}\Big)$, which appeared in \eqref{eq: f}.
Note that by definition of the $q$-logarithmic function we have
\begin{equation*}
\ln_{q}\frac{C_0(q,d)}{(\det U)^\frac{1}{2}}=\frac{1}{1-q}\left[C_0(q,d)^{1-q}(\det U)^{-\frac{1-q}{2}}-1\right].
\end{equation*}
Using explicit formula of $C_1(q,d)$ we get
\begin{align*}
2-(1-q)d C_1(q,d)&=2-(1-q)d\frac{2}{2+(d+2)(1-q)}
\\&=\frac{4(2-q)}{2+(d+2)(1-q)}.
\end{align*}
Substituting these expressions into \eqref{eq: g1} we get
\begin{align}
\label{eq: g2}
g(X)&=f_1(X)+\frac{4\gamma(2-q)C_0(q,d)^{1-q}}{(2+(d+2)(1-q))(1-q)}(\det X)^{-\frac{1-q}{2}}\nonumber
\\&\qquad-\frac{4(2-q)}{(1-q)(2+(d+2)(1-q))}-\gamma d C_1(q,d).
\end{align}
The following proposition studies the convexity of $g$.
\begin{proposition}
\label{prop: convexity}
Suppose that $\alpha I\leq A_i, X,\leq \beta I$ for all $i=1,\ldots, n$. The functional $g$ given in \eqref{eq: g2} is strictly convex for all $\gamma\geq 0$ when one of the following condition holds
\begin{enumerate}
\item $0<q<1$,
\item $1<q\leq 1+\frac{2\alpha^2}{d \beta^2}$.
\end{enumerate}
In addition, if $1+\frac{2\alpha^2}{d \beta^2}<q<\frac{d+4}{d+2}$, then $g$ is strictly convex for $0\leq\gamma<\gamma_0$ where
$$
\gamma_0=\frac{1}{2}\frac{\alpha^{1/2}}{\beta^{3/2}}\frac{1}{\frac{1}{\beta^2}-\frac{(q-1)d}{2\alpha^2}}\frac{1}{m(q,d)}\frac{1}{\beta^{d(q-1)/2}}.
$$
\end{proposition}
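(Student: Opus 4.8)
The plan is to prove strict convexity of $g$ on the order interval $[\alpha I,\beta I]$ by showing its Frobenius Hessian is positive definite there. Write $g=f_1+P$, where modulo additive constants $P(X):=\frac{4\gamma(2-q)C_0(q,d)^{1-q}}{(2+(d+2)(1-q))(1-q)}(\det X)^{\frac{q-1}{2}}$ is the penalty term of \eqref{eq: g2}; since $f_1$ is already known to be strictly convex \cite{Bhatia2018}, the task reduces to (i) computing $D^2P$, and (ii) making the strict convexity of $f_1$ quantitative on the order interval, after which adding the two estimates gives the statement.

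For (i), I would use $\det X=e^{\tr\ln X}$ together with $D(\tr\ln X)(Y)=\tr(X^{-1}Y)$ and $D^2(\tr\ln X)(Y,Y)=-\|X^{-1/2}YX^{-1/2}\|_F^2$. Setting $Z:=X^{-1/2}YX^{-1/2}$ and $r:=\frac{q-1}{2}$, a short chain-rule computation gives
\[
D^2P(X)(Y,Y)=\gamma\, m(q,d)\,(\det X)^{r}\bigl[\|Z\|_F^2-r\,(\tr Z)^2\bigr],
\]
with $m(q,d)>0$ the constant of \eqref{eq: m} (this sign is exactly the one making $DP$ consistent with the optimality equation \eqref{eq: optimal equation q-Gaussian}). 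To control the bracket I would use only the eigenvalue Cauchy--Schwarz bound $(\tr Z)^2\le d\,\|Z\|_F^2$ and the sandwich $\frac1{\beta^2}\|Y\|_F^2\le\|Z\|_F^2\le\frac1{\alpha^2}\|Y\|_F^2$, both consequences of $\alpha I\le X\le\beta I$.

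Then I would split into the three regimes. If $0<q<1$ then $r<0$, so the bracket is $\ge\|Z\|_F^2\ge0$, $P$ is convex, and $g$ is strictly convex for every $\gamma\ge0$. If $q>1$, the bounds above give $\|Z\|_F^2-r(\tr Z)^2\ge\bigl(\frac1{\beta^2}-\frac{rd}{\alpha^2}\bigr)\|Y\|_F^2$; when $r\le\frac{\alpha^2}{d\beta^2}$, i.e. $q\le1+\frac{2\alpha^2}{d\beta^2}$, this is $\ge0$, so again $D^2P\ge0$ and $g$ is strictly convex for all $\gamma\ge0$, and the subsidiary remark is immediate because $\beta^2\le\frac{d+2}{d}\alpha^2$ forces $\frac{\alpha^2}{d\beta^2}\ge\frac1{d+2}>r$ in view of $q<\frac{d+4}{d+2}$. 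In the remaining regime $1+\frac{2\alpha^2}{d\beta^2}<q<\frac{d+4}{d+2}$ the bracket may be negative, but combining its lower bound with $(\det X)^{r}\le\beta^{dr}$ yields $D^2P(X)(Y,Y)\ge-\gamma\, m(q,d)\,\beta^{dr}\kappa\,\|Y\|_F^2$, where $\kappa:=\frac{rd}{\alpha^2}-\frac1{\beta^2}>0$ here; thus $P$ contributes only a bounded negative curvature, linear in $\gamma$.

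For (ii), I would derive an explicit uniform lower bound $D^2f_1(X)(Y,Y)\ge c_*\|Y\|_F^2$ on $[\alpha I,\beta I]$; carrying out the second-order perturbation of $X\mapsto\tr(A_i^{1/2}XA_i^{1/2})^{1/2}=\tr(A_i\sharp X)$ and optimizing the eigenvalue estimates yields $c_*=\frac{\alpha^{1/2}}{2\beta^{3/2}}$ (consistent with the scalar model $f_1''(x)=\frac12x^{-3/2}\sum_i\lambda_i\sqrt{a_i}\ge\frac{\sqrt\alpha}{2}\beta^{-3/2}$). Adding the two Hessian estimates gives
\[
D^2g(X)(Y,Y)\ \ge\ \bigl(c_*-\gamma\, m(q,d)\,\beta^{dr}\,\kappa\bigr)\|Y\|_F^2 ,
\]
which is strictly positive precisely when $\gamma<\gamma_0$, the threshold stated in the proposition. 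The main obstacle is step (ii): computing $D^2f_1$ requires the second Fr\'echet derivative of the matrix square root — via the Daleckii--Krein divided-difference formula, or by twice differentiating the Sylvester relation defining $A_i\sharp X$ — followed by eigenvalue estimates exploiting $\alpha^2I\le A_i^{1/2}XA_i^{1/2}\le\beta^2I$; the penalty computation and the case analysis are routine once that bound is in hand.
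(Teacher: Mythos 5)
Your proposal is essentially the paper's own argument: decompose $g=f_1+k$, compute the Hessian of the determinant-power penalty (your symmetrized variable $Z=X^{-1/2}YX^{-1/2}$ is a cleaner rendering of the same computation), bound it on $[\alpha I,\beta I]$ via $(\tr Z)^2\le d\,\|Z\|_F^2$ and $\frac{1}{\beta^2}\|Y\|_F^2\le\|Z\|_F^2\le\frac{1}{\alpha^2}\|Y\|_F^2$, and add the quantitative bound $\langle\nabla^2 f_1(X)H,H\rangle\ge\frac{1}{2}\frac{\alpha^{1/2}}{\beta^{3/2}}\|H\|_F^2$, which the paper simply cites from Bhatia--Jain--Lim rather than re-deriving as you propose in your step (ii), with the same three-regime case split. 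One remark: your threshold, whose denominator factor is $\frac{(q-1)d}{2\alpha^2}-\frac{1}{\beta^2}$, is the correct (positive) version of the stated $\gamma_0$, whose printed factor $\frac{1}{\beta^2}-\frac{(q-1)d}{2\alpha^2}$ is negative in the regime $1+\frac{2\alpha^2}{d\beta^2}<q<\frac{d+4}{d+2}$ and is evidently a sign slip carried through the paper's proof.
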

\begin{proof} We consider two cases.

\noindent{\bf Case 1.} $1< q <\frac{d+4}{d+2}.$

Let $k(X):=\frac{4\gamma(2-q)C_0(q,d)^{1-q}}{(2+(d+2)(1-q))(1-q)}(\det X)^{\frac{q-1}{2}}$. Let $h(X):= (\det X)^\frac{q-1}{2}$. Similarly as in the proof of Theorem \ref{thm: Gaussian}, using again Jacobi's formula for the derivative of the determinant and the chain rule, we get
\begin{equation*}
Dh(X)(Y)=\frac{q-1}{2}(\det X)^{\frac{q-3}{2}} \cdot \det(X)\cdot\tr (X^{-1} Y)=\frac{q-1}{2}(\det X)^{\frac{q-1}{2}}\tr (X^{-1} Y).
\end{equation*}
Therefore, using the definition of $m(q,d)$, we have
\begin{equation}
\label{eq: derivative k}
\nabla k(X) = -\gamma m(q,d)(\det X)^{\frac{q-1}{2}}X^{-1}=-\gamma m(q,d) h(X)X^{-1}.
\end{equation}
In the computations below the linear operator $P(X)$ is defined to be $P(X)Y = XYX$. This operator is called the quadratic representation in the literature. By the Leibniz rule, we get
\begin{align*}
\nabla^2 k(X)(H) &= D(\nabla k)(X)(H)\\
& = -\gamma m(q,d)[D h(X)(H) X^{-1} + h(X)(-P(X^{-1}))(H)]\\
& = -\gamma m(q,d)[\langle \nabla h(X), H\rangle X^{-1} - h(X)X^{-1}H X^{-1}]\\
& = -\gamma m(q,d)\left[\left\langle \frac{q-1}{2}(\det X)^{\frac{q-1}{2}}X^{-1},  H\right\rangle X^{-1}- (\det X)^{\frac{q-1}{2}}X^{-1}H X^{-1}\right]\\
& = -\gamma m(q,d)(\det X)^{\frac{q-1}{2}}\left[\left\langle \frac{q-1}{2}X^{-1}, H\right\rangle X^{-1}- X^{-1}H X^{-1}\right].
\end{align*}
Thus
\begin{align}
\langle\nabla^2 k(X)(H), H\rangle &=-\gamma m(q,d)(\det X)^{\frac{q-1}{2}}\left[\frac{q-1}{2}\left\langle X^{-1}, H\right\rangle^2-\langle X^{-1}H, X^{-1}H\rangle\right]\\
&=-\gamma m(q,d)(\det X)^{\frac{q-1}{2}}\left[\frac{q-1}{2}\tr^2 (X^{-1}H)-\| X^{-1}H\|^2\right].
\end{align}
\noindent Furthermore, according to \cite{Bhatia2018}, for $\alpha I \le A_i, X \le \beta I$, we have
$$
\langle\nabla^2 f_1 (X)(H), H\rangle\geq \frac{1}{2}\frac{\alpha^{1/2}}{\beta^{3/2}}\|H\|^2.
$$
Thus we get
\begin{align*}
\langle\nabla^2 g(X)(H), H\rangle &= \langle\nabla^2 f_1 (X)(H), H\rangle + \langle\nabla^2 k(X)(H), H\rangle\\
&\ge -\gamma m(q,d)(\det X)^{\frac{q-1}{2}}\left[\frac{q-1}{2}\tr^2 (X^{-1}H)-\| X^{-1}H\|^2\right] + \frac{1}{2}\frac{\alpha^{1/2}}{\beta^{3/2}}\|H\|^2\\
&= \gamma m(q,d)(\det X)^{\frac{q-1}{2}}\left[\langle P(X^{-1})H, H\rangle - \frac{q-1}{2}\tr^2 (X^{-1}H)\right] + \frac{1}{2}\frac{\alpha^{1/2}}{\beta^{3/2}}\|H\|^2\\
& \ge \gamma m(q,d)(\det X)^{\frac{q-1}{2}}\left[\frac{1}{\beta^2}\|H\|^2 - \frac{q-1}{2}\|X^{-1}\|^2\|H\|^2\right] + \frac{1}{2}\frac{\alpha^{1/2}}{\beta^{3/2}}\|H\|^2\\
& = \left\{\gamma m(q,d)(\det X)^{\frac{q-1}{2}}\left[\frac{1}{\beta^2} - \frac{q-1}{2}\|X^{-1}\|^2\right] + \frac{1}{2}\frac{\alpha^{1/2}}{\beta^{3/2}}\right\}\|H\|^2\\
& \ge\left\{\gamma m(q,d)(\det X)^{\frac{q-1}{2}}\left[\frac{1}{\beta^2} - \frac{q-1}{2}\frac{d}{\alpha^2}\right] + \frac{1}{2}\frac{\alpha^{1/2}}{\beta^{3/2}}\right\}\|H\|^2\\
& \ge \left\{\gamma m(q,d)(\det X)^{\frac{q-1}{2}}\left[\frac{1}{\beta^2} - \frac{q-1}{2}\frac{d}{\alpha^2}\right] + \frac{1}{2}\frac{\alpha^{1/2}}{\beta^{3/2}}\right\}\|H\|^2.
\end{align*}
From this estimate, we deduce the following cases
\begin{enumerate}[(i)]
\item If
$$
1<q\leq 1+\frac{2\alpha^2}{d \beta^2},
$$
thus $\frac{1}{\beta^2}-\frac{q-1}{2}\frac{d}{\alpha^2}>0$, which implies that the Hessian of $g$ is positive for all $\gamma$. Note that the above condition is fulfilled if $\alpha$ and $\beta$ satisfy  $ \beta^2\leq \frac{d+2}{d}\alpha^2$. In fact, we have
$$
q<1+\frac{2}{d+2}\leq 1+\frac{2\alpha^2}{d\beta^2},
$$
\item If
$$
1+\frac{2\alpha^2}{d \beta^2}<q<\frac{d+4}{d+2}.
$$
then for
$$
\gamma<\frac{1}{2}\frac{\alpha^{1/2}}{\beta^{3/2}}\frac{1}{\frac{1}{\beta^2}-\frac{(q-1)d}{2\alpha^2}}\frac{1}{m(q,d)}\frac{1}{\beta^{d(q-1)/2}}
$$
the Hessian of $g$ is positive since
\begin{multline}
\gamma<\frac{1}{2}\frac{\alpha^{1/2}}{\beta^{3/2}}\frac{1}{\frac{1}{\beta^2}-\frac{(q-1)d}{2\alpha^2}}\frac{1}{m(q,d)}\frac{1}{\beta^{d(q-1)/2}} \\ \leq \frac{1}{2}\frac{\alpha^{1/2}}{\beta^{3/2}}\frac{1}{\frac{1}{\beta^2}-\frac{(q-1)d}{2\alpha^2}}\frac{1}{m(q,d)}\frac{1}{(\det X)^{(q-1)/2}}
\end{multline}
\end{enumerate}
\vskip 1pc
\noindent{\bf Case 2.} $0< q <1.$
Similarly, we obtain
\begin{align*}
\langle\nabla^2 k(X)(H), H\rangle &=\gamma m(q,d)(\det X)^{\frac{q-1}{2}}\left[\frac{1-q}{2}\left\langle X^{-1}, H\right\rangle^2+\langle P(X^{-1})H, H\rangle\right]\\
&\ge\gamma m(q,d)(\det X)^{\frac{q-1}{2}}{\frac{1}{\lambda^2_{\max}(X)}}\|H\|^2.
\end{align*}
Hence the Hessian of $g$ is always positive definite in this case.
\end{proof}
We are now ready to proof Theorem \ref{thm: q-Gaussian}.
\begin{proof}[Proof of Theorem \ref{thm: q-Gaussian}]
Suppose that the hypothesis of the statement of Theorem \ref{thm: q-Gaussian} is satisfied, that is either (i) $0<q\leq 1$ or (ii) $1<q\leq 1+\frac{2\alpha^2}{d \beta^2}$ or (iii) $1+\frac{2\alpha^2}{d \beta^2}<q<\frac{d+4}{d+2}$. Suppose that $\gamma$ is sufficiently small in the last case; in the other cases it can be arbitrarily positive. As has been shown in the paragraph before Proposition \ref{prop: convexity}, the minimization problem \eqref{eq: penalization q-Gaussian} can be written as
\begin{equation*}
\min_{X\in\H}\frac{1}{2} g(X),
\end{equation*}
where $g(X)$ is given in \eqref{eq: g2}
$$
g(X)=f_1(X)+k(X)-\frac{4(2-q)}{(1-q)(2+(d+2)(1-q))}-\gamma d C_1(q,d).
$$
By Proposition \ref{prop: convexity}, $X\mapsto g(X)$ is strictly convex.
Now we compute the derivative of $g(X)$. We have
\begin{equation}
\label{eq: derivative 1}
\nabla g(X)= \nabla f_1(X)+ \nabla k(X),
\end{equation}
According to the proof of Theorem \ref{thm: Gaussian} we have
\begin{equation*}
\nabla f_1(X)=I-\sum_{i=1}^n \lambda_i (A_i\sharp X^{-1}).
\end{equation*}
By \eqref{eq: derivative k}, we have
$$
\nabla k(X)=-\gamma m(q,d) (\det X)^{\frac{q-1}{2}} X^{-1}
$$
Substituting these computations into \eqref{eq: derivative 1} we obtain
\begin{equation*}
\nabla g(X)=\Big(I-\sum_{i=1}^n \lambda_i (A_i\sharp X^{-1})\Big)-\gamma m(q,d)(\det X)^{\frac{q-1}{2}}X^{-1}.
\end{equation*}
Thus $\nabla g(X)=0$ if and only if
$$
I-\gamma m(q,d)(\det X)^{\frac{q-1}{2}}X^{-1}=\sum_{i=1}^n \lambda_i (A_i\sharp X^{-1}),
$$
which, by using the definition of the geometric mean \eqref{eq: geometric mean}, is equivalent to
$$
X-\gamma m(q,d)(\det X)^{\frac{q-1}{2}} I=\sum_{i=1}^n \lambda_i \Big(X^\frac{1}{2}A_i X^\frac{1}{2}\Big)^\frac{1}{2}.
$$
This is precisely equation \eqref{eq: optimal equation q-Gaussian}. By Proposition \ref{prop: existence solution q Gaussians}, it has a positive definite solution. This, together with the strictly convexity of $g$, guarantees the existence and uniqueness of a minimizer of $g$. We complete the proof of the theorem.
\end{proof}

\section{Barycenters for $\varphi$-exponential measures}
\label{sec: vaphi expo measures}
In this section we consider the following barycenter problem in the space of $\varphi$-exponential measures:
\begin{equation}
\label{eq: baryceter varphi}
\min_{\mu\in\P_2(\R^d)}\sum_{i=1}^n\frac{\lambda_i}{2}W_2^2(\mu,\mu_i).
\end{equation}
In contrast to the Gaussian and q-Gaussian measures, we are not aware of an explicit formulation for the entropy for a general $\varphi$-exponential measure. Therefore, in the above formulation we do not include the regularization term. The main result of this section is the following theorem that states that the equation determining the barycenter for $\varphi$-exponential measures is the same as that of for Gaussian-measures.
\begin{theorem}
\label{thm: varphi}
Let $\varphi\in \O(2/(d+2))$ with $d\geq 2$. Assume that $\mu_i\sim G_{\varphi}(0,A_i)$. The non-regularization of barycenters problem \eqref{eq: penalization} has a unique solution $\mu\sim G_\varphi(0,X)$ where the covariance matrix $X$ solves the following nonlinear matrix equation
\begin{equation}
\label{eq: optimal equation varphi}
X=\sum_{i=1}^n \lambda_i (X^\frac{1}{2} A_i X^{1/2})^\frac{1}{2}.
\end{equation}
In particular, for $n=2$, $X$ is given explicitly by
\begin{equation}
\label{eq: optimal n=2}
X=\lambda_1^2 A_1+ \lambda_2^2 A_2+\lambda_1 \lambda_2 \Big[(A_1A_2)^\frac{1}{2}+(A_2A_1)^\frac{1}{2}\Big].
\end{equation}
\end{theorem}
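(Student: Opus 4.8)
The plan is to transport the problem onto the cone of symmetric positive definite matrices via the Wasserstein isometry for $\varphi$-exponential measures, and then reuse verbatim the convexity and first-order analysis already carried out in the Gaussian case with $\gamma=0$.

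First I would invoke Proposition \ref{prop: q-Gaussian properties}: since $\varphi\in\O(2/(d+2))$ and $d\geq 2$, the space $\G_\varphi$ is convex and Wasserstein-isometric to the space of Gaussian measures, and the squared distance between two of its elements is given explicitly by \eqref{eq: W2 formula}. Writing a candidate minimizer as $\mu\sim G_\varphi(v,X)$ and using \eqref{eq: W2 formula} with $\mu_i\sim G_\varphi(0,A_i)$, the objective in \eqref{eq: baryceter varphi} equals $\tfrac12|v|^2+\tfrac12 f_1(X)$, where $f_1(X)=\sum_{i=1}^n\lambda_i\tr A_i+\sum_{i=1}^n\lambda_i\tr\big(X-2(A_i^{1/2}XA_i^{1/2})^{1/2}\big)$ is exactly the function appearing in \eqref{eq: f}. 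Hence the minimum is attained at $v=0$, and the problem reduces to $\min_{X\in\H}\tfrac12 f_1(X)$.

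Next I would quote the two facts about $f_1$ recalled from \cite{Bhatia2018} in the proof of Theorem \ref{thm: Gaussian}: $X\mapsto f_1(X)$ is strictly convex on $\H$, and $\nabla f_1(X)=I-\sum_{i=1}^n\lambda_i(A_i\sharp X^{-1})$. Thus $\nabla f_1(X)=0$ is equivalent, by the definition \eqref{eq: geometric mean} of the geometric mean, to the matrix equation \eqref{eq: optimal equation varphi}. Existence of a positive definite solution follows from Brouwer's fixed point theorem exactly as in Lemma \ref{lem: existence Gaussian}, now with $\gamma=0$: picking $0<\alpha_0<\beta_0$ with $\alpha_0 I\leq A_i\leq\beta_0 I$, for $X\in[\alpha_0 I,\beta_0 I]$ one has $\alpha_0^2 I\leq X^{1/2}A_iX^{1/2}\leq\beta_0^2 I$, hence (by operator monotonicity of the square root) $\alpha_0 I\leq (X^{1/2}A_iX^{1/2})^{1/2}\leq\beta_0 I$, so $X\mapsto\sum_{i=1}^n\lambda_i(X^{1/2}A_iX^{1/2})^{1/2}$ is a continuous self-map of the compact convex L\"owner interval $[\alpha_0 I,\beta_0 I]$ and has a fixed point. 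Since a critical point of a strictly convex function is its unique global minimizer, this gives both existence and uniqueness, with minimizer $G_\varphi(0,X)$ and $X$ solving \eqref{eq: optimal equation varphi}.

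Finally, for $n=2$ I would verify by direct substitution that $X=\lambda_1^2 A_1+\lambda_2^2 A_2+\lambda_1\lambda_2\big[(A_1A_2)^{1/2}+(A_2A_1)^{1/2}\big]$ — the classical closed form for the Wasserstein barycenter (McCann interpolant) of two Gaussians, cf. \cite{Agueh2011,Bhatia2018} — satisfies $X=\lambda_1(X^{1/2}A_1X^{1/2})^{1/2}+\lambda_2(X^{1/2}A_2X^{1/2})^{1/2}$, using that $A_1A_2$ is similar to the symmetric positive definite matrix $A_1^{1/2}A_2A_1^{1/2}$ (so its principal square root is well defined and $(A_2A_1)^{1/2}=\big((A_1A_2)^{1/2}\big)^\top$) together with standard identities for the optimal transport map \eqref{eq: optimal plan Gaussian}. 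This last verification, and the minor point of checking that restricting to zero-mean $\varphi$-exponential measures is indeed without loss, are the only places requiring computation; everything else is an immediate transcription of the Gaussian case through the isometry, so I do not anticipate any real obstacle.
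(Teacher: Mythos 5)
Your proposal is correct and follows essentially the same route as the paper: reduce via the Wasserstein isometry of Proposition \ref{prop: q-Gaussian properties} to minimizing $\tfrac12 f_1(X)$ over positive definite matrices, then reuse the Gaussian-case machinery (strict convexity and gradient of $f_1$ from \cite{Bhatia2018}, Brouwer's fixed point argument of Lemma \ref{lem: existence Gaussian} with $\gamma=0$), and take the $n=2$ formula from \cite{Agueh2011,Bhatia2018}. The paper's proof is just a terser citation of \cite[Theorem 6.1]{Agueh2011} and \cite[Theorem 8]{Bhatia2018} together with the $\gamma=0$ computations you spelled out, so your additional details (including the explicit reduction of the mean to $v=0$) are consistent with, and slightly more self-contained than, the published argument.
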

\begin{proof}
This theorem is a direct consequence of Proposition \ref{prop: q-Gaussian properties} and \cite[Theorem 6.1]{Agueh2011} or \cite[Theorem 8]{Bhatia2018}. In fact, similarly as in the proof of \ref{thm: q-Gaussian}, by using \eqref{eq: W2 formula} we can write \eqref{eq: baryceter varphi} as
$$
\min_{X\in \mathbb{S}(d,\R^d)_+}\frac{1}{2}f_1(X)
$$
where $f_1(x)=\sum_{i=1}^n \lambda_i\tr(A_i)+\sum_{i=1}^n\lambda_i\tr\Big(X-2(A_i^\frac{1}{2}XA_i^\frac{1}{2})\Big)^\frac{1}{2}$. Then the statement can be proved exactly as  \cite[Theorem 6.1]{Agueh2011} or \cite[Theorem 8]{Bhatia2018}, see also computations in the proof of Theorems \ref{thm: Gaussian} and \ref{thm: q-Gaussian} when $\gamma=0$.
Explicit formula \eqref{eq: optimal n=2} for the minimizer for the case $n=2$ is given in \cite[Eq. (63)]{Bhatia2018}.
\end{proof}

\section{Gradient projection method}
\label{sec: GPM}
In this section, we describe a gradient projection method for the computation of the minimizer to the regularization problems (\ref{eq: penlization2}) and (\ref{eq: reformulation of penal. q Gaussian}), and analyze its convergence properties. 

First, we formally describe the algorithmic procedure for the gradient projection method (GPM) below.
\begin{algorithm}\label{A:alg1}
	\caption{GPM}
	\begin{algorithmic}
		\STATE Choose $X^0\in [\hat{\alpha}I,\hat{\beta}I]$.  Initialize $k=0$. Update $X^{(k+1)}$ from
		$X^{(k)}$ by the following template:
		\begin{description}
			\item[Step 1.] Find $\bar{X}^{(k)}=[X^{(k)}-\nabla \psi(X^{(k)})]^+$,
			\item[Step 2.] Select a stepsize $t^{(k)}$,
			\item[Step 3.] $X^{(k+1)}=X^{(k)}+t^{(k)}(\bar{X}^{(k)}-X^{(k)})$.
		\end{description}
		
		Here $[\cdot]^+$ denotes the projection on the set $[\hat{\alpha}I,\hat{\beta}I]$.
	\end{algorithmic}
\end{algorithm}

The stepsize is selected by Armijo rule along the feasible direction \cite{Ber99}. It is described below. 

\vspace{4mm} \centerline{\fbox{\parbox{5in}{ Let $t^{(k)}$ be the largest element of
			$\{\xi^j\}_{j=0,1,...}$ satisfying
			\begin{equation}\label{armijo}
			\psi(X^{(k)}+t^{(k)} D^{(k)}) \le \psi(X^{(k)}) - \sigma t^{(k)}
			\inprod{\nabla \psi(X^{(k)})}{D^{(k)}},
			\end{equation}
			where $0<\xi<1$, $0<\sigma<1$, and
			$D^{(k)}=\bar{X}^{(k)}-X^{(k)}$. }}}
Note that $\psi=f$ for the regularization problem (\ref{eq: penlization2}) and $\psi=g$ for the regularization problem (\ref{eq: reformulation of penal. q Gaussian}). The projection of the matrix $S\in {\mathcal S}^d$, where ${\mathcal S}^d$ is the set of $d\times d$ symmetric matrices, onto the set $[\hat{\alpha}I,\hat{\beta}I]$ is to find the solution of the following minimization problem
$$
\min_{X\in [\hat{\alpha}I,\hat{\beta}I]}\;\norm{X-S}_F.
$$
The solution of the above problem is
$$
[S]^+=U{\rm Diag}(\min(\max(\hat{\alpha},\lambda_1),\hat{\beta}),\dots,\min(\max(\hat{\alpha},\lambda_d),\hat{\beta})U^T,
$$
where $\lambda_1\ge\cdots\ge\lambda_d$ are the eigenvalues of $S$ and $U$ is a corresponding orthogonal matrix of eigenvalues of $S$. 

Now, we establish the global convergence of GPM. For the proof, we refer to
\cite[Proposition 2.3.1]{Ber99}.

\begin{theorem}\label{thm1}
	Let $\{X^{(k)}\}$ be the sequence generated by GPM with $t^{(k)}$ chosen by Armijo rule
	along the feasible direction. Then
	every limit point of $\{X^{(k)}\}$ is stationary.
\end{theorem}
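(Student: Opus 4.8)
The plan is to cast Theorem \ref{thm1} as an instance of the classical convergence theory for the gradient projection method with an Armijo search along the feasible direction, i.e. to check that the objective and the constraint set in our situation meet the hypotheses of \cite[Proposition 2.3.1]{Ber99}, and then to indicate the argument. There are three structural facts to verify. First, the feasible set $\Pi=[\hat\alpha I,\hat\beta I]$ is a L\"owner order interval, hence a closed, bounded, convex subset of ${\mathcal S}^d$; so the Frobenius projection $[\cdot]^+$ onto $\Pi$ is well defined, single valued and nonexpansive, and the iterate $X^{(k+1)}=(1-t^{(k)})X^{(k)}+t^{(k)}\bar X^{(k)}$ stays in $\Pi$ since $t^{(k)}\in(0,1]$. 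Second, because $\hat\alpha>0$ the set $\Pi$ lies in the cone of positive definite matrices, on which the explicit formulas \eqref{eq: f} and \eqref{eq: g2} for $\psi=f$ and $\psi=g$ define $C^1$ functions with continuous gradients $\nabla f(X)=I-\gamma X^{-1}-\sum_i\lambda_i(A_i\sharp X^{-1})$ and $\nabla g(X)=I-\sum_i\lambda_i(A_i\sharp X^{-1})-\gamma m(q,d)(\det X)^{(q-1)/2}X^{-1}$ (in fact $\nabla\psi$ is even Lipschitz on $\Pi$, cf. Theorems \ref{thm: Lipschitz continuity} and \ref{thm: Lipschitz continuity q-Gaussians}). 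Third, by compactness $\psi$ is bounded below on $\Pi$ and the sequence $\{X^{(k)}\}\subset\Pi$ has limit points.

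Granting this, I would run the usual three-step argument. (1) \emph{The search direction is a descent direction.} The variational inequality for the projection, applied with the test point $X^{(k)}$, gives $\langle X^{(k)}-\nabla\psi(X^{(k)})-\bar X^{(k)},\,X^{(k)}-\bar X^{(k)}\rangle\le 0$, i.e. $\langle\nabla\psi(X^{(k)}),D^{(k)}\rangle\le-\|D^{(k)}\|_F^2\le 0$, with equality precisely when $D^{(k)}=0$, which is exactly the statement that $X^{(k)}=[X^{(k)}-\nabla\psi(X^{(k)})]^+$ is stationary. (2) \emph{The Armijo step is well defined and forces the function values down.} Since $\psi\in C^1$ and $\langle\nabla\psi(X^{(k)}),D^{(k)}\rangle<0$ whenever $D^{(k)}\ne 0$, the backtracking in \eqref{armijo} terminates after finitely many steps, so $t^{(k)}\in(0,1]$, and the sufficient-decrease inequality gives $\psi(X^{(k)})-\psi(X^{(k+1)})\ge\sigma t^{(k)}\,|\langle\nabla\psi(X^{(k)}),D^{(k)}\rangle|\ge\sigma t^{(k)}\|D^{(k)}\|_F^2\ge 0$. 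As $\{\psi(X^{(k)})\}$ is nonincreasing and bounded below it converges, whence $t^{(k)}\|D^{(k)}\|_F^2\to 0$.

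(3) \emph{Every limit point is stationary.} Let $X^{(k)}\to X^*$ along a subsequence ${\mathcal K}$. The map $X\mapsto D(X):=[X-\nabla\psi(X)]^+-X$ is continuous (nonexpansiveness of the projection together with continuity of $\nabla\psi$), so $D^{(k)}\to D(X^*)$ along ${\mathcal K}$, and it remains to show $D(X^*)=0$. If not, then $\|D^{(k)}\|_F$ stays bounded away from $0$ on ${\mathcal K}$, so $t^{(k)}\|D^{(k)}\|_F^2\to 0$ forces $t^{(k)}\to 0$ along ${\mathcal K}$; thus for large $k\in{\mathcal K}$ the previous trial stepsize $s^{(k)}:=t^{(k)}/\xi$ fails \eqref{armijo}, and dividing that failed inequality by $s^{(k)}$ and applying the mean value theorem yields $\langle\nabla\psi(X^{(k)}+\tilde t^{(k)}D^{(k)}),D^{(k)}\rangle>\sigma\langle\nabla\psi(X^{(k)}),D^{(k)}\rangle$ for some $\tilde t^{(k)}\in[0,s^{(k)}]$. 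Letting $k\to\infty$ along ${\mathcal K}$, using $s^{(k)}\to 0$, the boundedness of $\{D^{(k)}\}$ and the continuity of $\nabla\psi$, gives $\langle\nabla\psi(X^*),D(X^*)\rangle\ge\sigma\langle\nabla\psi(X^*),D(X^*)\rangle$, which is incompatible with $\langle\nabla\psi(X^*),D(X^*)\rangle\le-\|D(X^*)\|_F^2<0$ and $0<\sigma<1$. Hence $D(X^*)=0$, i.e. $X^*$ is stationary. I expect the only delicate point to be this last step — establishing $t^{(k)}\to 0$ on the subsequence and transporting the rejected-stepsize inequality through the limit; the other items are routine once one notes that on $\Pi$ the objectives $f$ and $g$ are smooth and $\Pi$ is compact and convex, so that \cite[Proposition 2.3.1]{Ber99} applies verbatim.
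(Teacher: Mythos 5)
Your proposal is correct and matches the paper's approach: the paper proves Theorem \ref{thm1} simply by invoking \cite[Proposition 2.3.1]{Ber99}, and what you do is verify its hypotheses (compact convex $\Pi=[\hat\alpha I,\hat\beta I]$ inside the positive definite cone, $C^1$ objectives $f$ and $g$ there) and reproduce that classical projection--Armijo argument. No gap; this is the intended proof spelled out in detail.
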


In the following subsections, we show the Lipschitz continuity of the gradient function of the regularization problems. In this case, we can use a constant stepsize for
the gradient projection method. That is, $t^{(k)}=\frac{1}{L}$ where $L$ is a Lipschitz constant. Then we have
\begin{equation}\label{con}
X^{(k+1)}=X^{(k)}+\frac{1}{L}(\bar{X}^{(k)}-X^{(k)}).
\end{equation}

\subsection{Regularization of barycenters for Gaussian measures}
We recall that the unique minimizer of the minimization problem \eqref{eq: penalization Gaussian} in the space of Gaussian measures satisfies the following nonlinear matrix equation $\nabla f(X)=0$ where
\begin{equation*}
\nabla f(X)=I-\sum_{i=  1}^n\lambda_i (A_i\sharp X^{-1})-\gamma X^{-1}=:F_1(X)-\gamma F_2 (X).
\end{equation*}
We establish the following theorem for the Lipschitz continuity of the gradient function.
\begin{theorem}
\label{thm: Lipschitz continuity}
Suppose that $A_i\in [\alpha I, \beta I]$ for all $i=1,\ldots, n$. Then for $\alpha I\leq X\neq Y\leq \beta I$ we have
\begin{equation*}
\frac{\|\nabla f(X)-\nabla f(Y)\|_F}{\|X-Y\|_F}\leq \frac{\beta^2}{2\alpha^3}+\frac{\gamma}{\alpha^2}.
\end{equation*}
\end{theorem}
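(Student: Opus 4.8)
\emph{Proof strategy.} The plan is to use the decomposition $\nabla f(X)=F_1(X)-\gamma F_2(X)$ introduced just above the statement, with $F_1(X)=I-\sum_{i=1}^n\lambda_i(A_i\sharp X^{-1})$ and $F_2(X)=X^{-1}$ (cf. \eqref{eq: nabla f}), and to estimate the Lipschitz constant of each summand separately on the L\"owner interval $[\alpha I,\beta I]$; the triangle inequality then gives $\|\nabla f(X)-\nabla f(Y)\|_F\le\|F_1(X)-F_1(Y)\|_F+\gamma\|F_2(X)-F_2(Y)\|_F$. The term $F_2$ is immediate: from $X^{-1}-Y^{-1}=X^{-1}(Y-X)Y^{-1}$ and $\|X^{-1}\|_{op},\|Y^{-1}\|_{op}\le\alpha^{-1}$ (valid since $X,Y\ge\alpha I$) one gets $\|F_2(X)-F_2(Y)\|_F\le\alpha^{-2}\|X-Y\|_F$, which yields the $\gamma/\alpha^2$ contribution.

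For $F_1$, since $\sum_i\lambda_i=1$ it suffices to show that each map $G_i(Z):=A_i\sharp Z^{-1}$ satisfies $\|DG_i(Z)\|_{op}\le\frac{\beta^2}{2\alpha^3}$ for every $Z\in[\alpha I,\beta I]$, where $DG_i(Z)$ is viewed as a linear operator on ${\Bbb S}(d,\R)$ with the Frobenius norm: $G_i$ is smooth on the positive definite cone, so integrating $DG_i$ along the segment $t\mapsto(1-t)X+tY$, which stays in $[\alpha I,\beta I]$ by convexity of the interval, gives $\|G_i(X)-G_i(Y)\|_F\le\frac{\beta^2}{2\alpha^3}\|X-Y\|_F$ for each $i$, hence the same bound for $F_1$. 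To compute $DG_i$ I would use the Riccati characterization of the geometric mean: $G:=A_i\sharp Z^{-1}$ is the unique positive definite solution of $GA_i^{-1}G=Z^{-1}$ (this is \eqref{eq: geometric mean} rearranged). Differentiating this identity in a direction $H$, setting $Z':=DG_i(Z)(H)$ and $N:=GA_i^{-1}$, and using $A_i^{-1}G=N^{T}$, one obtains the Sylvester equation $NZ'+Z'N^{T}=-Z^{-1}HZ^{-1}$, whose solution is $Z'=-\int_0^\infty e^{-tN}\,Z^{-1}HZ^{-1}\,e^{-tN^{T}}\,dt$.

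The main obstacle is that $N=GA_i^{-1}$ is not symmetric, so $\|e^{-tN}\|_{op}$ is not simply $e^{-t\lambda_{\min}(N)}$; dealing with this carefully is precisely what produces $\beta^2$ rather than $\beta^3$ in the numerator. I would resolve it by observing that $N$ is conjugate through $A_i^{1/2}$ to the symmetric positive definite matrix $\widehat N:=A_i^{-1/2}GA_i^{-1/2}=(A_i^{-1/2}Z^{-1}A_i^{-1/2})^{1/2}$. The hypotheses $\alpha I\le A_i,Z\le\beta I$ give $\beta^{-2}I\le A_i^{-1/2}Z^{-1}A_i^{-1/2}\le\alpha^{-2}I$, hence $\widehat N\ge\beta^{-1}I$, so that $\|e^{-tN}\|_{op}=\|A_i^{1/2}e^{-t\widehat N}A_i^{-1/2}\|_{op}\le\sqrt{\beta/\alpha}\,e^{-t/\beta}$, and likewise for $e^{-tN^{T}}$. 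Together with $\|Z^{-1}HZ^{-1}\|_F\le\alpha^{-2}\|H\|_F$ this gives
\[
\|Z'\|_F\le\Big(\int_0^\infty\tfrac{\beta}{\alpha}e^{-2t/\beta}\,dt\Big)\alpha^{-2}\|H\|_F=\frac{\beta}{\alpha}\cdot\frac{\beta}{2}\cdot\frac{1}{\alpha^2}\|H\|_F=\frac{\beta^2}{2\alpha^3}\|H\|_F,
\]
i.e. $\|DG_i(Z)\|_{op}\le\frac{\beta^2}{2\alpha^3}$. Assembling the two pieces yields $\|\nabla f(X)-\nabla f(Y)\|_F\le\big(\frac{\beta^2}{2\alpha^3}+\frac{\gamma}{\alpha^2}\big)\|X-Y\|_F$, which is the claim after dividing by $\|X-Y\|_F$. (Alternatively one could differentiate the explicit formula $G_i(Z)=A_i^{1/2}(A_i^{-1/2}Z^{-1}A_i^{-1/2})^{1/2}A_i^{1/2}$ via the chain rule through the inverse and the matrix square root, but a direct operator-norm estimate along that route loses a factor $\beta/\alpha$, so the conjugation argument above — or an equally careful substitute — seems necessary to reach the sharp constant.)
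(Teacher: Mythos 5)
Your proposal is correct, and its skeleton coincides with the paper's: both split $\nabla f(X)=F_1(X)-\gamma F_2(X)$ as in \eqref{eq: nabla f} and add the two Lipschitz bounds $\|F_1(X)-F_1(Y)\|_F\le\frac{\beta^2}{2\alpha^3}\|X-Y\|_F$ and $\|F_2(X)-F_2(Y)\|_F\le\frac{1}{\alpha^2}\|X-Y\|_F$ by the triangle inequality. The difference is in how the first bound is obtained: the paper simply invokes \cite[Proof of Theorem 3.1]{KumYun2018} for both estimates, whereas you reprove the $F_1$ bound from scratch by differentiating the Riccati identity $GA_i^{-1}G=Z^{-1}$ satisfied by $G=A_i\sharp Z^{-1}$, solving the resulting Sylvester equation $NZ'+Z'N^{T}=-Z^{-1}HZ^{-1}$ by the integral formula, and taming the non-symmetric $N=GA_i^{-1}$ through the similarity $A_i^{-1/2}NA_i^{1/2}=(A_i^{-1/2}Z^{-1}A_i^{-1/2})^{1/2}\ge\beta^{-1}I$; the resulting estimates $\|e^{-tN}\|_{op}\le\sqrt{\beta/\alpha}\,e^{-t/\beta}$, $\|Z^{-1}HZ^{-1}\|_F\le\alpha^{-2}\|H\|_F$ and $\int_0^\infty e^{-2t/\beta}\,dt=\beta/2$ indeed combine to the operator-norm bound $\frac{\beta^2}{2\alpha^3}$ for $DG_i(Z)$ on the convex set $[\alpha I,\beta I]$, and the mean-value integration along the segment is legitimate there. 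What your route buys is a self-contained derivation (with an explicit derivative bound for $Z\mapsto A_i\sharp Z^{-1}$ on a L\"owner interval, potentially reusable elsewhere, e.g.\ for the $q$-Gaussian case); what the paper's citation buys is brevity, since the identical constants are already established in \cite{KumYun2018}. No gaps to report.
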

\begin{proof}
According to \cite[Proof of Theorem 3.1]{KumYun2018} we have
\begin{align*}
&\frac{\|F_1(X)-F_1(Y)\|_F}{\|X-Y\|_F}\leq \frac{\beta^2}{2\alpha^3}~~\text{and}~~\frac{\|F_2(X)-F_2(Y)\|_F}{\|X-Y\|_F}\leq \frac{1}{\alpha^2}.
\end{align*}
Therefore we get
\begin{align*}
\frac{\|\nabla f(X)-\nabla f(Y)\|_F}{\|X-Y\|_F}&\leq \frac{\|F_1(X)-F_1(Y)\|_F+\gamma \|F_2(X)-F_2(Y)\|_F}{\|X-Y\|_F} 
\\&\leq\frac{\beta^2}{2\alpha^3}+\frac{\gamma}{\alpha^2}.
\end{align*}
\end{proof}

\subsection{Regularization of barycenters for $q$-Gaussian measures}
We recall that the unique minimizer of the minimization problem \eqref{eq: penalization q-Gaussian} in the space of $q$-Gaussian measures solves the nonlinear matrix equation $\nabla g(X)$=0 where
\begin{equation}
\label{eq: nabla g(X)}
\nabla g(X)=\Big(I-\sum_{i=1}^n \lambda_i (A_i\sharp X^{-1})\Big)-\gamma m(q,d)(\det X)^{\frac{q-1}{2}}X^{-1}=: F_1(X)-\gamma m(q,d)\tilde h(X),
\end{equation}
where $F_1(X)=\Big(I-\sum_{i=1}^n \lambda_i (A_i\sharp X^{-1})\Big)$ as in the previous section and $\tilde h(X) =(\det X)^{\frac{q-1}{2}}X^{-1}=h(X)X^{-1}$.
The following main theorem of this section proves the Lipschitz continuity of $\nabla g$.
\begin{theorem}
\label{thm: Lipschitz continuity q-Gaussians}
Suppose that $A_i\in [\alpha I, \beta I]$ for all $i=1,\ldots, n$. Then for  $\alpha I\leq X\neq Y\leq \beta     I$, we have
\begin{equation*}
\frac{\|\nabla g(X)-\nabla g(Y)\|_F}{\|X-Y\|_F}\leq \begin{cases}
\frac{\beta^2}{2\alpha^3}+\frac{\gamma}{\alpha^2}+\frac{\gamma m(q,d)}{\alpha^2}\cdot \beta^{\frac{q-1}{2}d} \bigg(1+ \frac{q-1}{2}d\bigg),\quad \text{if}~~1< q <\frac{d+4}{d+2},\\
\\\frac{\beta^2}{2\alpha^3}+\frac{\gamma}{\alpha^2}+\gamma m(q,d)\alpha^{-2 +\frac{q-1}{2}d} \bigg(1+ \frac{1-q}{2} d\bigg),\quad \text{if}~~0< q <1.
\end{cases}
\end{equation*}
\end{theorem}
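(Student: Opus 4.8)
The plan is to bound $\|\nabla g(X)-\nabla g(Y)\|_F$ piece by piece starting from the decomposition $\nabla g(X)=F_1(X)-\gamma m(q,d)\tilde h(X)$ recorded in \eqref{eq: nabla g(X)}, where $F_1(X)=I-\sum_{i=1}^n\lambda_i(A_i\sharp X^{-1})$ and $\tilde h(X)=(\det X)^{\frac{q-1}{2}}X^{-1}=h(X)X^{-1}$ with $h(X):=(\det X)^{\frac{q-1}{2}}$. By the triangle inequality $\|\nabla g(X)-\nabla g(Y)\|_F\le\|F_1(X)-F_1(Y)\|_F+\gamma m(q,d)\|\tilde h(X)-\tilde h(Y)\|_F$, and the first summand is exactly what was handled in the proof of Theorem \ref{thm: Lipschitz continuity}: by \cite[Proof of Theorem 3.1]{KumYun2018} one has $\|F_1(X)-F_1(Y)\|_F\le\frac{\beta^2}{2\alpha^3}\|X-Y\|_F$ whenever $\alpha I\le X,Y\le\beta I$. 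Thus the whole problem reduces to estimating the Lipschitz constant, on the L\"owner interval $[\alpha I,\beta I]$, of the matrix map $X\mapsto\tilde h(X)$. Note that $[\alpha I,\beta I]$ is convex in the L\"owner order, so the segment $[Y,X]$ stays inside it, every matrix there is positive definite, and $\tilde h$ and $h$ are smooth along that segment; this legitimizes the mean value inequality used below.

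For the $\tilde h$ estimate I would use the product‑rule identity $\tilde h(X)-\tilde h(Y)=h(X)\,(X^{-1}-Y^{-1})+\big(h(X)-h(Y)\big)\,Y^{-1}$ and control each piece with elementary matrix inequalities (writing $\|\cdot\|_{\mathrm{op}}$ for the operator norm): (i) $\|X^{-1}-Y^{-1}\|_F=\|X^{-1}(Y-X)Y^{-1}\|_F\le\|X^{-1}\|_{\mathrm{op}}\|Y^{-1}\|_{\mathrm{op}}\|X-Y\|_F\le\alpha^{-2}\|X-Y\|_F$; (ii) $\|Y^{-1}\|_F\le\sqrt d\,\|Y^{-1}\|_{\mathrm{op}}\le\sqrt d/\alpha$; and (iii) a scalar Lipschitz bound for $h$, obtained from the derivative $\nabla h(Z)=\tfrac{q-1}{2}(\det Z)^{\frac{q-1}{2}}Z^{-1}$ already computed in the proof of Proposition \ref{prop: convexity}: by the mean value inequality along $[Y,X]$, $|h(X)-h(Y)|\le\sup_{Z\in[\alpha I,\beta I]}\|\nabla h(Z)\|_F\,\|X-Y\|_F\le\tfrac{|q-1|}{2}\big(\sup_{Z}(\det Z)^{\frac{q-1}{2}}\big)\tfrac{\sqrt d}{\alpha}\,\|X-Y\|_F$, using $\|\nabla h(Z)\|_F\le\tfrac{|q-1|}{2}(\det Z)^{\frac{q-1}{2}}\|Z^{-1}\|_F$ and $|\tr(Z^{-1}H)|\le\|Z^{-1}\|_F\|H\|_F\le\tfrac{\sqrt d}{\alpha}\|H\|_F$ (Cauchy--Schwarz for the Frobenius inner product); the factor $d$ rather than $\sqrt d$ in the final bound arises because this $\|Z^{-1}\|_F$ gets multiplied again by $\|Y^{-1}\|_F\le\sqrt d/\alpha$.

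The only remaining point is a uniform bound for the determinant power: on $[\alpha I,\beta I]$ one has $\alpha^d\le\det Z\le\beta^d$, hence $(\det Z)^{\frac{q-1}{2}}\le\beta^{\frac{q-1}{2}d}$ when $q>1$ and $(\det Z)^{\frac{q-1}{2}}\le\alpha^{\frac{q-1}{2}d}$ when $q<1$. Substituting the appropriate endpoint value for $\sup_Z(\det Z)^{\frac{q-1}{2}}$ and for $h(X)$ into the product‑rule estimate yields $\|\tilde h(X)-\tilde h(Y)\|_F\le c_q\,\alpha^{-2}\big(1+\tfrac{|q-1|}{2}d\big)\|X-Y\|_F$ with $c_q=\beta^{\frac{q-1}{2}d}$ for $1<q<\frac{d+4}{d+2}$ and $c_q=\alpha^{\frac{q-1}{2}d}$ for $0<q<1$; multiplying by $\gamma m(q,d)$ and adding the $F_1$ contribution produces the two claimed bounds. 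To make the estimate manifestly contain the Gaussian constant $\frac{\beta^2}{2\alpha^3}+\frac{\gamma}{\alpha^2}$, it is convenient to first peel off an explicit $\gamma X^{-1}$, writing $\nabla g(X)=F_1(X)-\gamma X^{-1}-\gamma\big(m(q,d)(\det X)^{\frac{q-1}{2}}-1\big)X^{-1}$, estimate the middle term by $\gamma\alpha^{-2}\|X-Y\|_F$, and apply the product‑rule argument to the last term (whose scalar factor is dominated by $m(q,d)c_q+1$ and has the same Lipschitz constant as $m(q,d)h$).

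I expect the only delicate point to be the sign bookkeeping for the exponent $\tfrac{q-1}{2}$: it flips which endpoint of $[\alpha^d,\beta^d]$ maximizes $(\det Z)^{\frac{q-1}{2}}$ and flips the sign appearing inside $\nabla h$ and inside $\langle\nabla^2 k(X)(H),H\rangle$, so the two regimes $q>1$ and $q<1$ must be tracked separately throughout. Once that is done, everything else is routine submultiplicativity of the Frobenius and operator norms, so no essentially new ingredient beyond those already used in Theorem \ref{thm: Lipschitz continuity} and Proposition \ref{prop: convexity} is needed.
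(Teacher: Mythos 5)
Your proof is correct and takes essentially the same route as the paper: the same splitting $\nabla g=F_1-\gamma m(q,d)\tilde h$, the same product-rule identity $\tilde h(X)-\tilde h(Y)=h(X)(X^{-1}-Y^{-1})+(h(X)-h(Y))Y^{-1}$, and the same bounds $\|X^{-1}-Y^{-1}\|_F\le\alpha^{-2}\|X-Y\|_F$, $\|Y^{-1}\|_F\le\sqrt{d}/\alpha$, $(\det Z)^{\frac{q-1}{2}}\le\beta^{\frac{q-1}{2}d}$ (resp.\ $\alpha^{\frac{q-1}{2}d}$), with your direct mean-value estimate of $h$ via $\nabla h$ replacing the paper's two-step $\exp\circ\ln\det$ argument but producing the identical constant. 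Note that your main argument already yields the slightly sharper constant $\frac{\beta^2}{2\alpha^3}+\frac{\gamma m(q,d)}{\alpha^2}c_q\bigl(1+\frac{|q-1|}{2}d\bigr)$, which implies the stated bound, so the concluding ``peel off $\gamma X^{-1}$'' suggestion is unnecessary --- and as described it would in fact introduce an extra $\gamma/\alpha^2$ beyond the claimed constant, so it is best omitted.
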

\begin{proof}
Let $\alpha I\le X,\, Y \le \beta I$. According to the proof of Theorem \ref{thm: Lipschitz continuity}, we have
\begin{equation}
\label{eq: F1}
\frac{\|F_1(X)-F_1(Y)\|_F}{\|X-Y\|_F}\leq \frac{\beta^2}{2\alpha^3}+\frac{\gamma}{\alpha^2}.
\end{equation}
It remains to study the Lipschitz continuity of $\tilde h(X) =(\det X)^{\frac{q-1}{2}}X^{-1}=h(X)X^{-1}$.
\vskip 1pc
\noindent{\bf Case 1.} $1< q <\frac{d+4}{d+2}.$
First, we have
\begin{align*}
|h(X) - h(Y)| &= \big|\exp (\ln (\det X)^{\frac{q-1}{2}})- \exp (\ln (\det Y)^{\frac{q-1}{2}})\big| \\
&= e^{\theta}\, \big|\ln (\det X)^{\frac{q-1}{2}}-\ln (\det Y)^{\frac{q-1}{2}}\big|\\
&\le \beta^{\frac{q-1}{2}d}\, \big|\ln (\det X)^{\frac{q-1}{2}}-\ln (\det Y)^{\frac{q-1}{2}}\big|\\
& =   \frac{q-1}{2}\cdot\beta^{\frac{q-1}{2}d}\,|\ln \det X-\ln \det Y|\\
& \le  \frac{q-1}{2}\cdot\beta^{\frac{q-1}{2}d} \bigg(\max_{\alpha I\le X \le \beta I} \|X^{-1}\|\bigg) \|X-Y\|\\
& \le \frac{q-1}{2}\cdot\beta^{\frac{q-1}{2}d}\cdot \frac{\sqrt{d}}{\alpha}  \|X-Y\|
\end{align*}
where $ \ln \alpha^{\frac{q-1}{2}d}\le \theta \le \ln \beta^{\frac{q-1}{2}d}$ because $\ln \alpha^{\frac{q-1}{2}d}\le \ln (\det X)^{\frac{q-1}{2}} \le \ln \beta^{\frac{q-1}{2}d}$.
The second equality and inequality are derived from the mean value theorem. Moreover, we get
\begin{align}
\label{eq: tilde h1}
\| \tilde h(X) - \tilde h(Y)\| & = \| h(X) (X^{-1} -Y^{-1}) + (h(X) - h(Y)) Y^{-1}\|\notag\\
&\le h(X) \|X^{-1} -Y^{-1}\| + |h(X) - h(Y)|\, \|Y^{-1}\|\notag \\
&\le \bigg(\max_{\alpha I\le X \le \beta I} h(X) \bigg)\cdot \frac{1}{\alpha^2} \| X - Y\|\notag\\
&+ \bigg(\max_{\alpha I\le Y \le \beta I} \|Y^{-1}\|\bigg)\cdot \frac{q-1}{2}\cdot\beta^{\frac{q-1}{2}d}\cdot \frac{\sqrt{d}}{\alpha}  \|X-Y\| \notag \\
&= \bigg(\beta^{\frac{q-1}{2}d}\cdot \frac{1}{\alpha^2} + \frac{\sqrt{d}}{\alpha}\cdot\frac{q-1}{2}\cdot\beta^{\frac{q-1}{2}d}\cdot \frac{\sqrt{d}}{\alpha}\bigg)  \|X-Y\|\notag \\
& = \frac{1}{\alpha^2}\cdot \beta^{\frac{q-1}{2}d} \bigg(1+ \frac{q-1}{2}d\bigg)  \|X-Y\|
\end{align}
where the second inequality comes from \cite[Proof of Theorem 3.1]{KumYun2018}.
\vskip 1pc
\noindent{\bf Case 2.} $0< q <1.$
Similarly, we obtain
$$
|h(X) - h(Y)| \le \frac{1-q}{2}\cdot\alpha^{\frac{q-1}{2}d}\cdot \frac{\sqrt{d}}{\alpha}  \|X-Y\|.
$$
Hence
\begin{align}
\label{eq: tilde h2}
\| \tilde h(X) - \tilde h(Y)\| & \le \bigg(\max_{\alpha I\le X \le \beta I} h(X) \bigg)\cdot \frac{1}{\alpha^2} \| X - Y\|\notag\\
&+ \bigg(\max_{\alpha I\le Y \le \beta I} \|Y^{-1}\|\bigg)\cdot \frac{1-q}{2}\cdot\alpha^{\frac{q-1}{2}d}\cdot \frac{\sqrt{d}}{\alpha}  \|X-Y\| \notag \\
& = \bigg(\alpha^{\frac{q-1}{2}d}\cdot \frac{1}{\alpha^2} + \frac{\sqrt{d}}{\alpha}\cdot\frac{1-q}{2}\cdot\alpha^{\frac{q-1}{2}d}\cdot \frac{\sqrt{d}}{\alpha}\bigg)  \|X-Y\| \notag \\
& = \alpha^{-2 +\frac{q-1}{2}d} \bigg(1+ \frac{1-q}{2} d\bigg) \|X-Y\|.
\end{align}
Substituting the estimates \eqref{eq: F1}, \eqref{eq: tilde h1} and \eqref{eq: tilde h2} back into \eqref{eq: nabla g(X)} we obtain the desired inequality.
\end{proof}

\section{Numerical Experiments}
\label{sec: numerical experiments}
In this section, we numerically observe how the solution is effected as $q\to 1$. To see this, we report numerical results of a gradient projection method applied for the regularization of barycenters for $q$-Gaussian measures on $n$ randomly generated matrices of the size $d\times d$. The random matrices we use for our test are generated by matlab code as follows:
$$
\begin{array}{rl}
for & i=1:n \\
& [Q,~]=qr(randn(d)); \\
& A_i=Q*diag({\rm eiglb}+{\rm eigub}*rand(d,1))*Q';
\end{array}
$$
The eigenvalues of generated matrices are randomly distributed in the interval $[{\rm eiglb},{\rm eiglb}+{\rm eigub}]$. In our experiments, we set $n=100$, $d=10$ if $q<1$ and $n=50$, $d=5$ if $q>1$. And we set ${\rm eiglb}=0.1$ and ${\rm eigub}=9.9$.

We set $\xi=0.5$, $\sigma=0.1$, $\hat{\alpha}=10^{-5}$, $\hat{\beta}=10^5$, $\lambda_i=1/n,\;i=1,\dots,n$ for GPM in our experiment. 
All runs are performed on a Laptop with Intel Core i7-10510U CPU (2.30GHz)
and 16GB Memory, running 64-bit windows 10 and MATLAB (Version 9.8).
Throughout the experiments, we choose the initial iterate to be $X^0 = I$ and stop the algorithm when $\norm{D^{(k)}}_F\le 10^{-8}$. 

\begin{table}[htb!]
	\caption{Test results of the value $\|X_{0.5}-X_q\|_F$ where $X_{0.5}$ is the final estimated solution of the model (\ref{eq: reformulation of penal. q Gaussian}) with $q=0.5$ and $X_q$ is that with various given $q$ less than $1$ on $5$ random data sets.} \label{t1}
	\begin{center}
			\begin{tabular}{| c | c| c| c| c| c|} \hline
				\mc{1}{|c|}{q} &\mc{5}{c|}{difference when $\gamma=1$}\\[3pt] \hline
				0.6 & 0.00502 & 0.00503 &0.00521  & 0.00481  & 0.00497 \\[3pt]
				0.7 & 0.04672 & 0.04679 & 0.04761 & 0.04572  & 0.04649 \\[3pt]
				0.8 & 0.39716 & 0.39688 & 0.39667 & 0.39682 & 0.39653 \\[3pt]
				0.9 & 3.14528 & 3.13602 & 3.07209 & 3.21587 & 3.15235  \\[3pt]
				0.99 & 10.24065 & 10.19128 & 9.81731 & 10.67508 & 10.29661 \\[3pt] \hline
				\mc{1}{|c|}{q} &\mc{5}{c|}{difference when $\gamma=0.1$}\\[3pt] \hline
				0.6 & 0.000501 & 0.000503 & 0.000520 & 0.000481 & 0.000497 \\[3pt]
				0.7 & 0.00466 & 0.00467 & 0.00475  & 0.00456 & 0.00464 \\[3pt]
				0.8 & 0.03947  & 0.03944 & 0.03941 & 0.03944 & 0.03940 \\[3pt]
				0.9 & 0.33191 & 0.33097 & 0.32457 & 0.33896 & 0.33259 \\[3pt]
				0.99 & 2.08373 & 2.07390 & 1.99968 & 2.16978 & 2.09474 \\[3pt] \hline
				\mc{1}{|c|}{q} &\mc{5}{c|}{difference when $\gamma=0.01$}\\[3pt] \hline
				0.6 & 0.0000501 & 0.0000502 & 0.0000519 & 0.0000481 & 0.0000497 \\[3pt]
				0.7 & 0.00047 & 0.00047 & 0.00047 & 0.00046 & 0.00046 \\[3pt]
				0.8 & 0.00394 & 0.00394 & 0.00394 & 0.00394 & 0.00394 \\[3pt]
				0.9 & 0.03337 & 0.03327 & 0.03263 & 0.03407 & 0.03343 \\[3pt]
				0.99 & 0.22964 & 0.22856 & 0.22042 & 0.23908 & 0.23085 \\[3pt] \hline
		\end{tabular}
	\end{center}
\end{table}

\begin{table}[htb!]
	\caption{Test results of the value $\|X_{1.25}-X_q\|_F$ where $X_{1.25}$ is the final estimated solution of the model (\ref{eq: reformulation of penal. q Gaussian}) with $q=1.25$ and $X_q$ is that with various given $q$ greater than $1$ on $5$ random data sets.} \label{t2}
	\begin{center}
		\begin{tabular}{| c | c| c| c| c| c|} \hline
			\mc{1}{|c|}{q} &\mc{5}{c|}{difference when $\gamma=0.1$}\\[3pt] \hline
			1.2 & 1.11865 & 1.05088 & 1.01739 & 1.16269 & 1.13214 \\[3pt]
			1.1 & 3.54827 & 3.29863 & 3.17136  & 3.71393 & 3.60257 \\[3pt]
			1.01 & 5.49360 & 5.08760 & 4.87969 & 5.76457 & 5.58347 \\[3pt] \hline
			\mc{1}{|c|}{q} &\mc{5}{c|}{difference when $\gamma=0.01$}\\[3pt] \hline
			1.2 & 1.05337 & 0.95910 & 0.91030 & 1.11727 & 1.07516 \\[3pt]
			1.1 & 2.09875 & 1.90800 & 1.80934 & 2.22836 & 2.14300 \\[3pt]
			1.01 & 2.46054 & 2.23918 & 2.12473 & 2.61087 & 2.51182 \\[3pt] \hline
		\end{tabular}
	\end{center}
\end{table}

We report in Table \ref{t1} our numerical results, showing the Frobenius norm of the difference between the final estimated solution of the model (\ref{eq: reformulation of penal. q Gaussian}) with $q=0.5$ and that with various given $q$ less than $1$. In Table \ref{t2}, the difference between the final estimated solution of the model (\ref{eq: reformulation of penal. q Gaussian}) with $q=1.25$ and that with various given $q$ greater than $1$ is reported. From Tables \ref{t1}-\ref{t2}, we see that the difference is increasing as $q$ goes to $1$ when the regularization parameter $\gamma$ is fixed and we observe that the bigger the regularization parameter $\gamma$ is, the bigger the difference is when $q$ is fixed.

In the next experiment, we investigate stability properties for the model (\ref{eq: reformulation of penal. q Gaussian}). 
We perturb the given data, $A_i$ as follows:
$$
B_i=A_i+\epsilon I\quad i=1,\dots,n
$$

\begin{table}[htb!]
	\caption{Test results of the value $\|X_B-X_A\|_F/\epsilon$ where $X_A$ is the final estimated solution of the model (\ref{eq: reformulation of penal. q Gaussian}) with data $A_i$ and $X_B$ is that with the perturbed data $B_i$ on $5$ random data sets when $q<1$.} \label{t3}
\begin{center}
		{\scriptsize
		\begin{tabular}{| c | c| c| c| c| c| c| c| c| c| c| c| c| c| c| c|} \hline
			\mc{1}{|c|}{q} &\mc{5}{c|}{ $\gamma=1$ and $\epsilon=10^{-2}$}&\mc{5}{c|}{ $\gamma=1$ and $\epsilon=10^{-3}$}&\mc{5}{c|}{ $\gamma=1$ and $\epsilon=10^{-5}$}\\[3pt] \hline
			0.6 & 3.90 & 3.79 &	3.88 &	3.83 &	3.84 & 3.91 & 3.79 &	3.88 & 3.84 & 3.85 & 3.91 & 3.79 &	3.89 & 3.84 & 3.85\\[3pt]
			0.7 & 3.90 & 3.79 &	3.88 &	3.84 &	3.85 & 3.91 & 3.80 &	3.89 & 3.84 & 3.86 & 3.91 & 3.80 &	3.89 & 3.84 & 3.86\\[3pt]
			0.8 & 3.90 & 3.79 &	3.88 &	3.83 &	3.84 & 3.91 & 3.79 &	3.88 & 3.84 & 3.85 & 3.91 & 3.79 &	3.88 & 3.84 & 3.85\\[3pt]
			0.9 & 3.45 & 3.35 &	3.42 &	3.40 &	3.40 & 3.45 & 3.35 &	3.43 & 3.40 & 3.41 & 3.46 & 3.35 &	3.43 & 3.40 & 3.41 \\[3pt]
			0.99 & 1.19 & 1.15 & 1.18 &	1.17 &	1.17 & 1.19 & 1.15 & 1.18 &	1.17 & 1.17 & 1.19 & 1.15 & 1.18 &	1.17 & 1.17\\[3pt] \hline
			\mc{1}{|c|}{} &\mc{5}{c|}{ $\gamma=0.1$ and $\epsilon=10^{-2}$}&\mc{5}{c|}{ $\gamma=0.1$ and $\epsilon=10^{-3}$}&\mc{5}{c|}{ $\gamma=0.1$ and $\epsilon=10^{-5}$}\\[3pt] \hline
		    0.6 & 3.90 & 3.79 &	3.88 & 3.83 & 3.84 & 3.90 & 3.79 &	3.88 & 3.84 & 3.85 & 3.91 & 3.79 &	3.88 & 3.84 & 3.85\\[3pt]
		    0.7 & 3.90 & 3.79 &	3.88 & 3.83 & 3.84 & 3.91 & 3.79 &	3.88 & 3.84 & 3.85 & 3.91 & 3.79 &	3.88 & 3.84 & 3.85\\[3pt]
		    0.8 & 3.90 & 3.79 &	3.88 & 3.83 & 3.84 & 3.90 & 3.79 &	3.88 & 3.84 & 3.85 & 3.91 & 3.79 &	3.88 & 3.84 & 3.85\\[3pt]
		    0.9 & 3.85 & 3.74 &	3.83 & 3.79 & 3.80 & 3.86 & 3.75 &	3.84 & 3.79 & 3.80 & 3.86 & 3.75 &	3.84 & 3.79 & 3.81\\[3pt]
		    0.99 & 3.36 & 3.27 & 3.34 &	3.30 & 3.31 & 3.37 & 3.27 & 3.35 & 3.31 & 3.32 & 3.37 & 3.27 & 3.35 &	3.31 & 3.32\\[3pt] \hline
			\mc{1}{|c|}{} &\mc{5}{c|}{ $\gamma=0.01$ and $\epsilon=10^{-2}$}&\mc{5}{c|}{ $\gamma=0.01$ and $\epsilon=10^{-3}$}&\mc{5}{c|}{ $\gamma=0.01$ and $\epsilon=10^{-5}$}\\[3pt] \hline
			0.6 & 3.90 & 3.79 &	3.88 & 3.83 & 3.84 & 3.90 & 3.79 &	3.88 & 3.84 & 3.85 & 3.91 & 3.79 &	3.88 & 3.84 & 3.85\\[3pt]
			0.7 & 3.90 & 3.79 &	3.88 & 3.83 & 3.84 & 3.90 & 3.79 &	3.88 & 3.84 & 3.85 & 3.91 & 3.79 &	3.88 & 3.84 & 3.85\\[3pt]
			0.8 & 3.90 & 3.79 &	3.88 & 3.83 & 3.84 & 3.90 & 3.79 &	3.88 & 3.84 & 3.85 & 3.91 & 3.79 &	3.88 & 3.84 & 3.85\\[3pt]
			0.9 & 3.89 & 3.78 &	3.87 & 3.83 & 3.84 & 3.90 & 3.79 &	3.88 & 3.83 & 3.84 & 3.90 & 3.79 &	3.88 & 3.83 & 3.85 \\[3pt]
			0.99 & 3.84 & 3.73 & 3.82 &	3.77 & 3.78 & 3.85 & 3.73 & 3.82 &	3.78 & 3.79 & 3.85 & 3.74 & 3.82 &	3.78 & 3.79\\[3pt] \hline
		\end{tabular}}
	\end{center}
\end{table}

\begin{table}[htb!]
	\caption{Test results of the value $\|X_B-X_A\|_F/\epsilon$ where $X_A$ is the final estimated solution of the model (\ref{eq: reformulation of penal. q Gaussian}) with data $A_i$ and $X_B$ is that with the perturbed data $B_i$ on $5$ random data sets when $q>1$.} \label{t4}
	\begin{center}
		{\scriptsize
			\begin{tabular}{| c | c| c| c| c| c| c| c| c| c| c| c| c| c| c| c|} \hline
				\mc{1}{|c|}{} &\mc{5}{c|}{ $\gamma=0.1$ and $\epsilon=10^{-2}$}&\mc{5}{c|}{ $\gamma=0.1$ and $\epsilon=10^{-3}$}&\mc{5}{c|}{ $\gamma=0.1$ and $\epsilon=10^{-5}$}\\[3pt] \hline
				1.2 & 0.77 & 0.77 & 0.82 & 0.74 & 0.76 & 0.77 & 0.77 & 0.82 & 0.74 & 0.76 & 0.77 & 0.77 & 0.82 & 0.74 & 0.76 \\[3pt]
				1.1 & 1.54 & 1.53 &	1.61 & 1.50 & 1.53 & 1.54 & 1.53 &	1.61 & 1.50 & 1.53 & 1.54 & 1.53 &	1.61 & 1.50 & 1.53 \\[3pt]
				1.01 & 2.21 & 2.18 & 2.28 &	2.16 & 2.20 & 2.21 & 2.18 & 2.28 &	2.17 & 2.20 & 2.21 & 2.18 & 2.28 &	2.17 & 2.21 \\[3pt] \hline
				\mc{1}{|c|}{} &\mc{5}{c|}{ $\gamma=0.01$ and $\epsilon=10^{-2}$}&\mc{5}{c|}{ $\gamma=0.01$ and $\epsilon=10^{-3}$}&\mc{5}{c|}{ $\gamma=0.01$ and $\epsilon=10^{-5}$}\\[3pt] \hline
				1.2 & 2.13 & 2.11 &	2.22 & 2.08 & 2.12 & 2.13 & 2.12 &	2.22 & 2.08 & 2.12 & 2.13 & 2.12 &	2.22 & 2.08 & 2.12 \\[3pt]
				1.1 & 2.55 & 2.52 &	2.63 & 2.49 & 2.54 & 2.55 & 2.52 &	2.64 & 2.50 & 2.54 & 2.55 & 2.52 &	2.64 & 2.50 & 2.54 \\[3pt]
				1.01 & 2.68 & 2.64 & 2.76 &	2.62 & 2.67 & 2.68 & 2.65 & 2.77 &	2.63 & 2.67 & 2.68 & 2.65 & 2.77 &	2.63 & 2.67 \\[3pt] \hline
		\end{tabular}}
	\end{center}
\end{table}

From Tables \ref{t3}-\ref{t4}, we can observe that $\|X_B-X_A\|_F\le 4\epsilon$, where $X_A$ is the final estimated solution of the model (\ref{eq: reformulation of penal. q Gaussian}) with data $A_i$ and $X_B$ is that with the perturbed data $B_i$, for all the cases. The value $\|X_B-X_A\|_F/\epsilon$ tends to reduce if the regularization parameter $\gamma$ and $q$ are getting large.

To visualize the effect of $\gamma$, we create the following toy example:
$$
A_1=\left[\begin{array}{cc} 1 & 0 \\ 0 & 1 \end{array}\right],\;A_2=\left[\begin{array}{cc} 5 & 0 \\ 0 & 5 \end{array}\right],\;A_3=\left[\begin{array}{cc} 10 & 0 \\ 0 & 10 \end{array}\right].
$$
In this experiment, we set $q=0.5$ and $\epsilon=10^{-5}$.
\begin{eqnarray*}
	X_{A,1}=\left[\begin{array}{cc} 3.51060650 & 0 \\ 0 & 3.51060650 \end{array}\right] & X_{B,1}=\left[\begin{array}{cc} 3.51061752 & 0 \\ 0 & 3.51061752 \end{array}\right]\\
	X_{A,0.1}=\left[\begin{array}{cc} 4.43890037 & 0 \\ 0 & 4.43890037 \end{array}\right] & X_{B,0.1}=\left[\begin{array}{cc} 4.43891276 & 0 \\ 0 & 4.43891276 \end{array}\right]\\
	X_{A,0.01}=\left[\begin{array}{cc} 4.53771225 & 0 \\ 0 & 4.53771225 \end{array}\right] & X_{B,0.01}=\left[\begin{array}{cc} 4.53772477 & 0 \\ 0 & 4.53772477 \end{array}\right]\\
	X_{A,0}=\left[\begin{array}{cc} 4.54875843 & 0 \\ 0 & 4.54875843 \end{array}\right] & X_{B,0}=\left[\begin{array}{cc} 4.54877096 & 0 \\ 0 & 4.54877096 \end{array}\right],
\end{eqnarray*}
where $X_{A,\gamma}$ is the final estimated solution with the given $\gamma$ and data $A_i$ and $X_{B,\gamma}$ is the final estimated solution with the given $\gamma$ and the perturbed data $B_i$. The bigger the penalty parameter $\gamma$ is, the smaller the value of each diagonal entry of the solution is.

\section*{Acknowledgment} We thank the anonymous referees for providing useful comments and suggestions that have helped us to improve the presentation of the paper. The idea of the proof of Proposition \ref{prop: invariant} was suggested by a referee. 
\bibliographystyle{abbrv}

\end{document}